\documentclass[12pt]{amsart}

\textheight=21truecm
\textwidth=15truecm
\voffset=-1cm
\hoffset=-1cm

\usepackage{color}
\usepackage{amsmath, amsthm, amssymb}
\usepackage{amsfonts}
\usepackage[ansinew]{inputenc}
\usepackage[dvips]{epsfig}
\usepackage{graphicx}
\usepackage[english]{babel}
\usepackage{hyperref}
\theoremstyle{plain}
\newtheorem{thm}{Theorem}[section]
\newtheorem{cor}[thm]{Corollary}
\newtheorem{lem}[thm]{Lemma}
\newtheorem{prop}[thm]{Proposition}

\theoremstyle{definition}

\theoremstyle{remark}
\newtheorem{rem}[thm]{Remark}

\numberwithin{equation}{section}

\newcommand{\de}{\partial}

\newcommand{\fls}{(-\Delta)^s}
\newcommand{\R}{\mathbb{R}}

\newcommand{\N}{\mathbb{N}}

\newcommand{\average}{{\mathchoice {\kern1ex\vcenter{\hrule height.4pt
width 6pt depth0pt} \kern-9.7pt} {\kern1ex\vcenter{\hrule
height.4pt width 4.3pt depth0pt} \kern-7pt} {} {} }}

\def\R{\mathbb{R}}

\begin{document}

\title[Boundary regularity for the fractional heat equation]{Boundary regularity for \\the fractional heat equation}

\author{Xavier Fern\'andez-Real}

\address{Universitat Polit\`ecnica de Catalunya, Facultat de Matem\`atiques i Estad\'istica, Pau Gargallo 5, 08028 Barcelona, Spain}

\email{xavier.fernandez-real@estudiant.upc.edu}

\author{Xavier Ros-Oton}

\address{University of Texas at Austin, Department of Mathematics, 2515 Speedway, TX 78712 Austin, USA}

\email{ros.oton@math.utexas.edu}

\keywords{Fractional Laplacian, fractional heat equation, boundary regularity.\\ $\ast$ \hspace{0.5mm} This work is part of the bachelor's degree thesis of the first author.}

\maketitle

\begin{abstract}
We study the regularity up to the boundary of solutions to fractional heat equation in bounded $C^{1,1}$ domains.

More precisely, we consider solutions to $\de_t u + \fls u=0 \textrm{ in }\Omega,\ t > 0$, with zero Dirichlet conditions in $\R^n\setminus \Omega$ and with initial data $u_0\in L^2(\Omega)$.
Using the results of the second author and Serra for the elliptic problem, we show that for all $t>0$ we have $u(\cdot, t)\in C^s(\R^n)$ and $u(\cdot, t)/\delta^s \in C^{s-\epsilon}(\overline\Omega)$ for any $\epsilon > 0$ and $\delta(x) = \textrm{dist}(x,\de\Omega)$.

Our regularity results apply not only to the fractional Laplacian but also to more general integro-differential operators, namely those corresponding to stable L\'evy processes.

As a consequence of our results, we show that solutions to the fractional heat equation satisfy a Pohozaev-type identity for positive times.
\end{abstract}

\section{Introduction and results}

The aim of this paper is to study the regularity of solutions to the fractional heat equation
\begin{equation}\label{eq.par}
  \left\{ \begin{array}{rcll}
  \de_t u + \fls u&=&0& \textrm{in }\Omega,\ t > 0 \\
  u&=&0& \textrm{in } \R^n\setminus \Omega,\ t > 0 \\
  u(x,0)&=&u_0(x)&\textrm{in }\Omega,\ \textrm{for } t = 0,
  \end{array}\right.
\end{equation}
and also to more general nonlocal parabolic equations $\partial_t u+Lu=0$ in bounded $C^{1,1}$ domains $\Omega$.
Here, $\fls$ is the fractional Laplacian, defined by
\[
(-\Delta)^s u (x)= c_{n,s}{\rm PV}\int_{\R^n}\frac{u(x)-u(y)}{|x-y|^{n+2s}}dy
\]
for $c_{n,s}$ a normalization constant.

The regularity of solutions to nonlocal equations is one of the hot topics in nonlinear analysis nowadays.
In particular, the regularity of solutions to parabolic problems like \eqref{eq.par} or related ones have been studied in \cite{CKS, BGR, CCV, FK, KS, CV, CSV, dPQR, CD, CD2, Serra, CVasseur, CF}.
Still, most of these works deal with the interior regularity of solutions, and there are few results concerning the regularity up to the boundary.
This is the topic of study in this paper.

For the elliptic problem
\begin{equation}
  \label{eq.ell}
  \left\{ \begin{array}{rcll}
  \fls u&=&g(x)& \textrm{in }\Omega \\
  u&=&0& \textrm{in } \R^n\setminus \Omega,
  \end{array}\right.
\end{equation}
the boundary regularity of solutions is quite well understood, as explained next.

When $\Omega=B_1$ and $g\equiv1$, the solution of problem \eqref{eq.ell} has a simple explicit expression.
Indeed, this solution is given by $u(x) = c(1-|x|^2)^s$ in $B_1$, for some constant $c>0$.
Notice that this solution is not smooth up to the boundary, but it is only $C^s(\overline\Omega)$.

It turns out that this boundary behavior is the same for all solutions, in the sense that any solution to \eqref{eq.ell} satisfies
\begin{equation}\label{eq.bounds}
-C\delta^s \leq u \leq C \delta^s\textrm{ in } \Omega,
\end{equation}
where $\delta(x)=\textrm{dist}(x,\partial\Omega)$.
Combining this bound with the well known interior regularity estimates, one finds that $u\in C^s(\overline\Omega)$ (see Proposition 1.1 in \cite{RS}).

Furthermore, for any $g\in L^\infty(\Omega)$, the solution $u$ to the elliptic problem \eqref{eq.ell} satisfies that $u/\delta^s$ is H\"older continuous up to the boundary, and
\begin{equation}\label{quotient}
\|u/\delta^s \|_{C^\alpha(\overline\Omega)} \leq C \|g\|_{L^\infty(\Omega)},
\end{equation}
for some $\alpha>0$ small.
This is the main result of \cite{RS}.

For the fractional heat equation \eqref{eq.par}, Chen-Kim-Song \cite{CKS} established sharp two-sided estimates for the heat kernel of \eqref{eq.par} in $C^{1,1}$ domains (see also \cite{BGR}).
Their estimates yield in particular a bound of the form \eqref{eq.bounds} for solutions to \eqref{eq.par} for positive times, and this implies that solutions $u(x,t)$ satisfy $u(\cdot,t)\in C^s(\overline\Omega)$ for $t>0$.
Nevertheless, the results in \cite{CKS,BGR} do not yield any estimate like \eqref{quotient}.

The aim of the present paper is to show that, for $t>0$, solutions to \eqref{eq.par} satisfy \eqref{quotient}.
Namely, we will show that, for any fixed $t_0>0$, solutions $u(x,t)$ satisfy
\[
\sup_{t \geq t_0} \left\|\frac{u(\cdot, t)}{\delta^s}\right\|_{C^\alpha(\overline{\Omega})} \leq C(t_0)\|u_0\|_{L^2(\Omega)}.
\]
To prove this, we use the results of the second author and Serra \cite{RS}.

\subsection{Main result}

As explained next, our main boundary regularity result apply not only to the fractional Laplacian, but also to more general integro-differential equations.

Indeed, we consider infinitessimal generators of stable and symmetric L\'evy processes.
These operators are uniquely determined by a finite measure on the unit sphere $S^{n-1}$, often referred as the {spectral measure} of the process.
When this measure is absolutely continuous, symmetric stable processes have generators of the form
\begin{equation} \label{eq.gen1}
 Lu(x) = \int_{\R^n} \bigl(2u(x)-u(x+y)-u(x-y)\bigr)\frac{a(y/|y|)}{|y|^{n+2s}}\,dy,
\end{equation}
where $s \in (0,1)$ and $a$ is any nonnegative function in $L^1(S^{n-1})$ satisfying $a(\theta) = a(-\theta)$ for $\theta\in S^{n-1}$.

The most simple and important case of stable process is the fractional Laplacian, whose spectral measure is constant on $S^{n-1}$ (so that the process is isotropic).

The regularity theory for general operators of the form \eqref{eq.gen1} has been recently developed by the second author and Serra in \cite{RS-F}; see also \cite{RS-L}.
Using the boundary regularity results established in \cite{RS-F}, we will show the following.

This is the main result of the present paper.

\begin{thm} \label{thm.main}
Let $\Omega\subset\R^n$ be any bounded $C^{1,1}$ domain, and $s\in(0,1)$.
Let $L$ be any operator of the form \eqref{eq.gen1} and satisfying the ellipticity conditions
\begin{equation}\label{eq.gen2}
 \Lambda_1 \leq \inf_{\nu\in S^{n-1}}\int_{S^{n-1}}|\nu\cdot\theta|^{2s}a(\theta)d\theta,  \qquad 0\leq a(\theta)\leq \Lambda_2 \quad \textrm{for all}\ \theta\in S^{n-1},
\end{equation}
where $\Lambda_1$ and $\Lambda_2$ are positive constants.

Let $u_0\in L^2(\Omega)$, and let $u$ be the solution to
\begin{equation} \label{eq.par-L}
  \left\{ \begin{array}{rcll}
  \de_t u + L u&=&0& \textrm{in }\Omega,\ t > 0 \\
  u&=&0& \textrm{in } \R^n\setminus \Omega,\ t>0 \\
  u(x,0)&=&u_0(x)&\textrm{in }\Omega,\ \textrm{for } t = 0.
  \end{array}\right.
\end{equation}
Then,
\begin{itemize}
 \item[(a)] For each $t_0 > 0$,
 \begin{equation}
 \label{eq.bound.a}
  \sup_{t \geq t_0} \|u(\cdot, t)\|_{C^s(\R^n)} \leq C_1(t_0)\|u_0\|_{L^2(\Omega)}.
 \end{equation}
 \item[(b)] For each $t_0 > 0$,
 \begin{equation}
 \label{eq.bound.b}
  \sup_{t \geq t_0} \left\|\frac{u(\cdot, t)}{\delta^s}\right\|_{C^{s-\epsilon}(\overline{\Omega})} \leq C_2(t_0)\|u_0\|_{L^2(\Omega)},
 \end{equation}
 where $\delta(x) = \textrm{dist}(x, \de \Omega)$, and for any $\epsilon > 0$.
\end{itemize}
The constants $C_1$ and $C_2$ depend only on $t_0$, $n$, $s$, $\epsilon$, $\Omega$, and the ellipticity constants $\Lambda_1$ and $\Lambda_2$.
Moreover, these constants $C_1$ and $C_2$ blow up as $t_0 \downarrow 0$.
\end{thm}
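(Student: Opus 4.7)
My strategy is to trade parabolic smoothing in time for regularity of the right-hand side in an associated elliptic problem, and then invoke the boundary regularity results of \cite{RS-F}. For each fixed $t \geq t_0$, the function $u(\cdot,t)$ solves the elliptic equation $Lu = -\partial_t u(\cdot,t)$ in $\Omega$ with vanishing exterior Dirichlet data, so both parts of the theorem will follow once $\partial_t u(\cdot, t)$ is controlled in a sufficiently strong norm by $\|u_0\|_{L^2(\Omega)}$.

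The first step is to bound $L^2$ norms of all time derivatives of $u$ using the spectral decomposition. The operator $L$ with Dirichlet exterior data is self-adjoint on $L^2(\Omega)$ with discrete spectrum $0 < \lambda_1 \leq \lambda_2 \leq \cdots \to \infty$ and orthonormal eigenbasis $\{\phi_k\}$, so writing $u_0 = \sum_k c_k\phi_k$ gives $u(\cdot,t) = \sum_k c_k e^{-\lambda_k t}\phi_k$; hence for any integer $m \geq 0$ and every $t \geq t_0$,
\[
\|\partial_t^m u(\cdot,t)\|_{L^2(\Omega)}^2 = \sum_k \lambda_k^{2m} c_k^2 e^{-2\lambda_k t} \leq \Bigl(\sup_{\lambda>0}\lambda^{2m}e^{-2\lambda t_0}\Bigr)\|u_0\|_{L^2(\Omega)}^2 = C(m,t_0)\|u_0\|_{L^2(\Omega)}^2.
\]
The second step upgrades these $L^2$ bounds to $L^\infty$ bounds. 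Each $w = \partial_t^m u$ again solves $\partial_t w + Lw = 0$ with zero exterior data, and ultracontractivity of the Dirichlet semigroup (either from the sharp heat kernel bounds of \cite{CKS, BGR}, or via a Moser/Nash iteration using the fractional Sobolev embedding associated to the Dirichlet form $\mathcal{E}(u,u) = (Lu,u)$, which is comparable to the standard one under \eqref{eq.gen2}) applied on $[t_0/2, t]$ yields $\|\partial_t^m u(\cdot,t)\|_{L^\infty(\Omega)} \leq C(m,t_0)\|u_0\|_{L^2(\Omega)}$ for $t \geq t_0$.

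With these bounds in hand, part (a) is immediate from the elliptic $C^s(\R^n)$ estimate of \cite{RS-F} applied to $Lu(\cdot,t) = g$ with $g := -\partial_t u(\cdot, t)$ bounded by $C(t_0)\|u_0\|_{L^2}$ in $L^\infty$. For part (b), I would bootstrap: applying (a) to the function $w = \partial_t u$, which satisfies the same parabolic equation with initial datum $\partial_t u(\cdot, t_0/2) \in L^2(\Omega)$ bounded by $C(t_0)\|u_0\|_{L^2}$ thanks to the previous steps, yields $\partial_t u(\cdot, t) \in C^s(\R^n)$ with the right bound for $t \geq t_0$. Feeding this Hölder right-hand side into the sharper elliptic boundary estimate of \cite{RS-F}, which promotes Hölder regularity of the data to $u/\delta^s \in C^{s-\epsilon}(\overline\Omega)$, delivers \eqref{eq.bound.b}. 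The main obstacle will be invoking the precisely correct elliptic boundary estimate: the $L^\infty$-data result from \cite{RS} only gives $u/\delta^s \in C^\alpha$ for some small unspecified $\alpha$, so the explicit exponent $s-\epsilon$ in \eqref{eq.bound.b} forces us to use the finer Hölder-data version from \cite{RS-F} and to track the dependence of all constants through the bootstrap so that only $t_0$, $n$, $s$, $\epsilon$, $\Omega$, $\Lambda_1$, and $\Lambda_2$ appear.
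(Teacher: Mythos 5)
Your proof is essentially correct, but it follows a genuinely different route from the paper's. The paper expands $u(\cdot,t)=\sum_k u_k\phi_k e^{-\lambda_k t}$ and estimates the series term by term: it first proves the quantitative eigenfunction bounds $\|\phi_k\|_{L^\infty}\leq C\lambda_k^{w-1}\|\phi_k\|_{L^2}$ (Proposition \ref{prop.eiglinf}, via the Green function bound of \cite{GH} and a bootstrap in $L^p$), feeds them into the elliptic estimates of \cite{RS-F} to get $\|\phi_k/\delta^s\|_{C^{s-\epsilon}}\leq C\lambda_k^w$, and then sums $\sum_k\lambda_k^w e^{-\lambda_k t_0}$ using the Weyl asymptotics $\lambda_k\sim C_0k^{2s/n}$ (Proposition \ref{prop.asymeiggen}, via Geisinger's theorem). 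You instead view $u(\cdot,t)$ as a solution of the elliptic problem $Lu=-\partial_t u(\cdot,t)$, control $\partial_t u$ in $L^\infty$ by combining the spectral bound $\|\partial_t^m u(\cdot,t_0/2)\|_{L^2}\leq C\|u_0\|_{L^2}$ with ultracontractivity of the Dirichlet semigroup, and then apply the elliptic boundary estimates once. This avoids both the Weyl law and the explicit polynomial eigenfunction bounds, which is a real simplification; what the paper's route buys in exchange is exactly those eigenfunction estimates, which are reused later (for the time-derivative bounds of Corollary \ref{cor.main} and for the interior seminorm estimates needed in the Pohozaev identity of Section \ref{sec6}). Two small cautions on your version: the heat kernel bounds of \cite{CKS,BGR} are stated for the fractional Laplacian only, so for general stable $L$ (whose spectral measure may vanish on part of $S^{n-1}$) you must use your Nash-iteration alternative, or the bound $p(x,1)\leq C(1+|x|^{n+2s})^{-1}$ from \cite{GH} together with domain monotonicity of the Dirichlet kernel --- this is precisely the input the paper extracts from \cite{GH} in Lemma \ref{lemma.maingen}. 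Also, the estimate of \cite{RS-F} already yields $\|u/\delta^s\|_{C^{s-\epsilon}(\overline\Omega)}\leq C\|g\|_{L^\infty(\Omega)}$ for merely bounded right-hand sides (this is how the paper obtains Corollary \ref{cor.reg-eig} directly from Proposition \ref{prop.eiglinf}), so your extra bootstrap producing a H\"older right-hand side, while harmless, is not needed.
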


The main result is the second part of the Theorem ---the H\"older regularity of $u/d^s$ up to the boundary.
As explained above, this is new even for the fractional Laplacian.

Notice that our result applies to spectral measures that may vanish in a subset of $S^{n-1}$.
In fact, the spectral measure could even be of the form $a=\chi_\Sigma$, where $\Sigma\subset S^{n-1}$ is any subset with $|\Sigma|>0$ and such that $\Sigma=-\Sigma$.

The main idea of the proof is to write the solution of the heat equation in terms of the eigenfunctions, check that these eigenfunctions fulfil bounds of the form \eqref{eq.bound.a}-\eqref{eq.bound.b}, and deduce the desired result.
Still, the proof is not immediate, since one has to prove first that the eigenfunctions belong to $L^\infty$, with an explicit bound of the form $\|\phi_k\|_{L^\infty}\leq C k^q$ for some fixed exponent $q$, and being $\phi_k$ the $k$-th eigenfunction.

To do this, we need to establish the following result, which may be of independent interest.

\begin{prop}
 \label{prop.maingen}
 Let $\Omega\subset \R^n$ be any bounded domain, $s\in(0,1)$, $g\in L^2(\Omega)$, and $u$ be the weak solution of
  \begin{equation}
  \label{eq.propmaingen}
    \left\{ \begin{array}{rcll}
    L u&=&g& \textrm{in }\Omega\\
    u&=&0& \textrm{in } \R^n\setminus \Omega,
    \end{array}\right.
  \end{equation}
 where $L$ is a nonlocal operator of the form \eqref{eq.gen1} and \eqref{eq.gen2}. 
 Assume in addition that $g\in L^p(\Omega)$.
 Then,
  \begin{enumerate}
   \item[(a)] If $1<p<\frac{n}{2s}$, 
   \[\|u\|_{L^q(\Omega)} \leq C\|g\|_{L^p(\Omega)},~~~q= \frac{np}{n-2ps},\]
   where $C$ is a constant depending only on $n$, $s$, $p$ and $\Lambda_2$.
   \item[(b)] If $p = \frac{n}{2s}$,
   \[\|u\|_{L^q(\Omega)} \leq C\|g\|_{L^p(\Omega)},~~~\forall q <\infty,\]
   where $C$ is a constant depending only on  $n$, $s$, $q$, $\Omega$ and $\Lambda_2$.
   \item[(c)] If $\frac{n}{2s} < p < \infty$,
   \[ \|u\|_{L^\infty(\Omega)} \leq C \|g\|_{L^p(\Omega)},\]
   where $C$ is a constant depending only on  $n$, $s$, $p$, $\Omega$ and $\Lambda_2$.
  \end{enumerate}
\end{prop}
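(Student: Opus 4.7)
I would use the Stampacchia truncation method adapted to the nonlocal setting, based on three ingredients: (i) coercivity of the bilinear form of $L$ on $H^s$; (ii) the fractional Sobolev embedding $H^s(\R^n)\hookrightarrow L^{2^*_s}(\R^n)$ with $2^*_s=\tfrac{2n}{n-2s}$; and (iii) a pointwise truncation inequality that substitutes for the chain rule.

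\textbf{Setup and master estimate.} The symmetry $a(\theta)=a(-\theta)$ makes $L$ self-adjoint, with Fourier symbol $|\xi|^{2s}A(\xi/|\xi|)$ where $A(\nu)=\int_{S^{n-1}}|\nu\cdot\theta|^{2s}a(\theta)\,d\theta\geq\Lambda_1$ by \eqref{eq.gen2}. Hence the bilinear form
\[
\mathcal{E}(u,v)=\tfrac12\iint_{\R^{2n}}(u(x)-u(y))(v(x)-v(y))\,\frac{a((x-y)/|x-y|)}{|x-y|^{n+2s}}\,dx\,dy
\]
is coercive, $\mathcal{E}(u,u)\geq\Lambda_1[u]_{H^s(\R^n)}^2$, and Lax--Milgram produces the unique weak solution in $H^s_0(\Omega):=\{v\in H^s(\R^n):v\equiv 0\text{ on }\Omega^c\}$. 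For $k\geq 0$ set $A(k)=|\{|u|>k\}|$ and $w_k=\mathrm{sgn}(u)(|u|-k)_+\in H^s_0(\Omega)$. A direct case analysis gives the pointwise inequality
\[
(u(x)-u(y))(w_k(x)-w_k(y))\geq (w_k(x)-w_k(y))^2,
\]
which, integrated against the positive kernel and combined with testing against $w_k$, Sobolev, and H\"older, yields the master estimate
\[
\|w_k\|_{L^{2^*_s}(\Omega)}\leq C\|g\|_{L^p(\Omega)}\,A(k)^{1/p'-1/2^*_s}\qquad\text{for }p\geq (2^*_s)'.
\]
Chebyshev's inequality $(h-k)A(h)^{1/2^*_s}\leq \|w_k\|_{L^{2^*_s}}$ then gives the recursion
\[
A(h)\leq C(h-k)^{-2^*_s}\|g\|_{L^p}^{2^*_s}A(k)^\beta,\qquad \beta=\tfrac{2^*_s}{p'}-1.
\]

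\textbf{The three cases.} In \emph{case (c)} ($p>n/(2s)$) one has $\beta>1$, and Stampacchia's iteration lemma forces $A(k_0)=0$ for some $k_0\leq C\|g\|_{L^p}$, i.e.\ $\|u\|_{L^\infty}\leq C\|g\|_{L^p}$. \emph{Case (b)} ($p=n/(2s)$) follows from case (a) applied with any $p'<n/(2s)$: since $\Omega$ is bounded, $\|g\|_{L^{p'}}\leq C\|g\|_{L^p}$, and the resulting $q'=np'/(n-2p's)$ can be made arbitrarily large by taking $p'\uparrow n/(2s)$. \emph{Case (a)} ($1<p<n/(2s)$): when $(2^*_s)'\leq p$ the master estimate gives the sharp Marcinkiewicz exponent $q=np/(n-2ps)$, which is upgraded to strong $L^q$ by interpolation against the base bound $L^{(2^*_s)'}\to L^{2^*_s}$ coming from $k=0$; the remaining range $1<p<(2^*_s)'$ is covered by duality against case (c) (using self-adjointness of $L$) combined with Riesz--Thorin interpolation.

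\textbf{Main obstacle.} The key nonlocal step is the pointwise truncation inequality, which replaces the usual chain rule: together with the positivity and symmetry of the kernel $K(z)=a(z/|z|)/|z|^{n+2s}$, it is what lets one plug $w_k$ back into the quadratic form and recover control of $[w_k]_{H^s}$. Once this inequality is in place, the three cases reduce to the classical Stampacchia scheme plus a standard duality/interpolation step; the only extra care needed is ensuring that the constants depend on $\Lambda_2$ only through the upper bound on $a$, which is automatic since the coercivity constant already absorbs $\Lambda_1$.
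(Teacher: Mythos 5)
Your route is genuinely different from the paper's. The paper does not touch the variational structure at all: it invokes the heat-kernel estimates of Glowacki--Hebisch to build a fundamental solution $V$ of $L$ with the pointwise bound $V(x)\leq c_2|x|^{2s-n}$, writes $u=V\ast g$ (after comparison with the solution for $|g|$), and then (a) is exactly Hardy--Littlewood--Sobolev for the Riesz potential, (b) is (a) plus H\"older on the bounded domain, and (c) is H\"older applied directly to the convolution. Your Stampacchia scheme replaces all of this by energy methods, which buys independence from the kernel estimates of \cite{GH} (you only need \eqref{eq.gen2}, not a pointwise bound on the fundamental solution); the price is that the iteration and interpolation bookkeeping is heavier, and your constants inevitably depend on the lower ellipticity constant $\Lambda_1$ through coercivity, whereas the statement (and the paper's proof) makes them depend only on $\Lambda_2$.

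Two places need repair. First, the passage from the master estimate to the \emph{sharp} exponent $q=np/(n-2ps)$ in case (a) is the weak link: Chebyshev at $k=0$ only gives weak-$L^{2^*_s}$, and interpolating a strong $(p_1,\infty)$ bound with $p_1>n/(2s)$ against the energy bound $L^{(2^*_s)'}\to L^{2^*_s}$ lands strictly above the line $1/q=1/p-2s/n$ at interior points, so as written you get a suboptimal $q$. You must either extract the genuine weak-type $(p,q)$ bound on the sharp line from the level-set recursion (a self-consistency argument with $A(k)\sim k^{-q}$, which does close since $q=2^*_s/(1-\beta)$ is exactly $np/(n-2ps)$) and then apply Marcinkiewicz between two such points, or run a Moser-type iteration with power test functions; the residual range $1<p<(2^*_s)'$ then follows by self-adjoint duality from the already-established range $(2^*_s)'\leq p<n/(2s)$ (duality against case (c) alone plus Riesz--Thorin again misses the sharp line). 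Second, your argument presupposes $n>2s$ so that $2^*_s=2n/(n-2s)$ makes sense; the case $n\leq 2s$ (i.e.\ $n=1$, $s\geq 1/2$, where $L$ is necessarily a multiple of the fractional Laplacian and only case (c) is nonvacuous) must be treated separately, as the paper does.
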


To prove this, we use the results of \cite{GH} to compare the fundamental solution associated to the operator $L$ with the one of the fractional Laplacian.

\subsection{Applications of the main result}

For second order equations, important tools when studying linear and semilinear problems in bounded domains are integration-by-parts-type identities.
For the Laplacian operator, one can easily check that any solution $u$ with $u=0$ on $\partial\Omega$ satisfies
\begin{equation}\label{pohozaev-s=1}
\int_\Omega (x\cdot \nabla u)\Delta u\,dx+\frac{n-2}{2}\int_\Omega u\Delta u\,dx=\frac{1}{2}\int_{\partial\Omega}\left(\frac{\partial u}{\partial\nu}\right)^2(x\cdot\nu)d\sigma.
\end{equation}
This identity was used originally by Pohozaev to prove nonexistence results for semilinear equations $-\Delta u=\lambda f(u)$, and identities of this type are commonly used in the analysis of PDEs.

For nonlocal operators, the second author and Serra found and established the fractional analogue this identity in \cite{RS-P}: if $u$ is any bounded solution to \eqref{eq.ell}, then $u/d^s$ is H\"older continuous in $\overline\Omega$, and
\begin{equation}\label{eq.poh1}
-\int_\Omega (x\cdot \nabla u)(-\Delta)^su\,dx-\frac{n-2s}{2}\int_\Omega u(-\Delta)^su\,dx=\frac{\Gamma(1+s)^2}{2}\int_{\partial\Omega}\left(\frac{u}{d^s}\right)^2(x\cdot\nu)d\sigma.
\end{equation}
Note that the boundary term $u/d^s|_{\partial\Omega}$ has to be understood in the limit sense ---recall that we showed that $u/d^s$ is continuous up to the boundary.

Here, thanks to Theorem \ref{thm.main} and the results in \cite{RS-P}, we are able to prove the Pohozaev identity for solutions to the fractional heat equation \eqref{eq.par}.
The result reads as follows.

\begin{cor}
\label{cor.pohozaev}
 Let $\Omega\subset\R^n$ be any bounded $C^{1,1}$ domain, and $s\in(0,1)$.
 Let $u_0\in L^2(\Omega)$, and let $u(x,t)$ be the solution to the fractional heat equation \eqref{eq.par}.
 Then, for any fixed $t > 0$, the identity \eqref{eq.poh1} holds, with $u = u(\cdot, t)$.
\end{cor}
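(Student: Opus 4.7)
The plan is to reduce the parabolic Pohozaev identity to the elliptic one \eqref{eq.poh1} of \cite{RS-P}, applied at each fixed time. Fix $t>0$ and set $v(x):=u(x,t)$. Since $u$ solves \eqref{eq.par}, the function $v$ is a solution of the elliptic problem
\[
  (-\Delta)^s v = g \quad \text{in } \Omega, \qquad v = 0 \quad \text{in } \R^n\setminus\Omega,
\]
with right-hand side $g(x):=-\partial_t u(x,t)$. The task is therefore to verify that $v$ satisfies the hypotheses of the identity from \cite{RS-P} at this fixed time.

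Theorem \ref{thm.main} supplies two of the three ingredients needed: part (a) gives $v\in C^s(\R^n)\subset L^\infty(\Omega)$, and part (b) gives $v/\delta^s\in C^{s-\epsilon}(\overline\Omega)$, so both sides of \eqref{eq.poh1} are meaningful. What remains is to show that the right-hand side $g=-\partial_t u(\cdot,t)$ enjoys enough regularity to invoke the elliptic Pohozaev identity; boundedness (or just a bit more) is enough.

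To control $\partial_t u(\cdot,t)$ I would use the linearity of the problem together with the smoothing of the Dirichlet fractional heat semigroup. Expanding $u_0$ in the $L^2(\Omega)$-orthonormal basis $\{\phi_k\}$ of Dirichlet eigenfunctions of $(-\Delta)^s$ on $\Omega$ with eigenvalues $\{\lambda_k\}$,
\[
 u(x,t)=\sum_k c_k\, e^{-\lambda_k t}\phi_k(x),\qquad c_k=\langle u_0,\phi_k\rangle,
\]
so that $\partial_t u(x,t)=-\sum_k \lambda_k c_k e^{-\lambda_k t}\phi_k(x)$. Using the elementary bound $\lambda_k e^{-\lambda_k t/2}\leq C(t)$ together with Parseval's identity, $\partial_t u(\cdot,t/2)\in L^2(\Omega)$ with norm controlled by $\|u_0\|_{L^2(\Omega)}$. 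But $w(x,\tau):=\partial_t u(x,\tau)$ is itself a solution of \eqref{eq.par} for $\tau\geq t/2$ with $L^2$ ``initial'' datum $w(\cdot,t/2)$, so applying Theorem \ref{thm.main}(a) to $w$ yields $\partial_t u(\cdot,t)\in C^s(\R^n)$; in particular, $g\in L^\infty(\Omega)$.

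With $v$ bounded, $v/\delta^s$ H\"older continuous up to the boundary, and $g\in L^\infty(\Omega)$, all hypotheses of the identity \eqref{eq.poh1} are met by $v$, and its conclusion is precisely the assertion of the corollary with $u$ replaced by $u(\cdot,t)$. The main (and essentially only) obstacle is the regularity of $\partial_t u(\cdot,t)$; everything else is a direct quote of Theorem \ref{thm.main} and \cite{RS-P}. A minor technical point is to justify termwise differentiation of the spectral series so that the formal identity for $\partial_t u$ is a genuine equality in $L^2$, but this is standard given the exponential decay of the coefficients.
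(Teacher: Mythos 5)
Your reduction to the elliptic problem at fixed time, and your semigroup-smoothing argument showing $\partial_t u(\cdot,t)\in C^s(\R^n)$ (hence $(-\Delta)^s u(\cdot,t)$ bounded), are both sound; the latter is exactly how the paper verifies hypothesis (c) of the Pohozaev machinery, via Corollary \ref{cor.main}. The gap is in your final step: you assert that ``$v$ bounded, $v/\delta^s$ H\"older up to the boundary, and $g\in L^\infty(\Omega)$'' suffices to invoke the identity of \cite{RS-P}. It does not. The result actually available (Proposition 1.6 of \cite{RS-P}, quoted in the paper as Theorem \ref{thm.pohozaevRS}) requires, in addition to global $C^s$ and $C^\alpha$ regularity of $v$ and $v/\delta^s$, the quantitative weighted interior estimates
\[
 [v]_{C^\beta(\Omega_\rho)}\leq C\rho^{s-\beta}\ \ \text{for all }\beta\in[s,1+2s),\qquad [v/\delta^s]_{C^\beta(\Omega_\rho)}\leq C\rho^{\alpha-\beta}\ \ \text{for all }\beta\in[\alpha,s+\alpha].
\]
With a right-hand side that is merely bounded, interior regularity for $(-\Delta)^s v=g$ stops at $C^{2s}$ (and even $g\in C^s$, which your argument does give, only yields $C^{3s}<C^{1+2s}$), so hypothesis (a) cannot be verified for $\beta$ close to $1+2s$, and the blow-up rates $\rho^{s-\beta}$, $\rho^{\alpha-\beta}$ are not addressed at all. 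These estimates are proved in \cite{RS} for the \emph{semilinear} problem $(-\Delta)^s u=f(x,u)$ with $f$ locally Lipschitz, not for a general $L^\infty$ right-hand side.

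The paper circumvents this by working on the eigenfunction expansion: each $\phi_k$ solves $(-\Delta)^s\phi_k=\lambda_k\phi_k$, which is of the semilinear form with $f(x,u)=\lambda_k u\in C^{0,1}$, so \cite[Corollary 1.6]{RS} supplies the full set of weighted estimates for each $\phi_k$ with constants of the form $C\lambda_k\|\phi_k\|_{L^\infty}$; combined with Proposition \ref{prop.eiglinf} and the eigenvalue asymptotics, the series $\sum_k u_k\lambda_k^w e^{-\lambda_k t}$ converges and the estimates pass to $u(\cdot,t)$. To repair your argument you would either have to reproduce this term-by-term verification, or bootstrap in the elliptic variable using that every time derivative $\partial_t^j u(\cdot,t)$ is in $C^s(\R^n)$ (so that $g$ is in fact $C^{1+2s-\epsilon}$ locally) \emph{and} separately establish the $\rho$-weighted blow-up rates near $\partial\Omega$; neither is automatic from what you have written.
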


In \cite{RS-P} the authors prove that the general Pohozaev identity holds whenever a function fulfils certain regularity hypotheses. In particular, they check that these hypotheses are fulfilled by the solutions of the elliptic problem, and so it follows their result. In the present paper we will check that the same hypotheses are also fulfilled by the solutions to the fractional heat equation \eqref{eq.par}, using the results by the second author and Serra, and therefore these functions also fulfil the general Pohozaev identity.
To do this, an essential ingredient is Theorem \ref{thm.main}, since for the right-hand side of the identity to be well-defined we need that $u/\delta^s$ can be extended continuously up to the boundary.

The paper is organized as follows.
In Section \ref{sec3} we prove previous results regarding the existence of eigenfunctions for the Dirichlet elliptic problem and the asymptotic behavior of their eigenvalues.
In Section \ref{sec4} we prove the main result that will be used regarding the regularity of the eigenfunctions.
In Section \ref{sec5} we prove Theorem \ref{thm.main}.
In Section \ref{sec6} we prove Corollary \ref{cor.pohozaev}.

\section{Existence of eigenfunctions and asymptotic behavior of eigenvalues}

\label{sec3}

The results proved in this section hold for general stable operators, with any spectral measure $a \in L^1(S^{n-1})$. 

The aim of this section is to prove the following, referring to the elliptic problem
\begin{equation}
  \label{eq.ell-L}
  \left\{ \begin{array}{rcll}
  L \phi&=&\lambda \phi& \textrm{in }\Omega \\
  \phi&=&0& \textrm{in } \R^n\setminus \Omega.
  \end{array}\right.
\end{equation}

\begin{prop}
\label{prop.asymeiggen}
  Let $\Omega\subset\R^n$ be any bounded domain, and $L$ an operator of the form \eqref{eq.gen1}, with $a\in L^1(S^{n-1})$. 
  Then,
  \begin{enumerate}
   \item[(a)]{ There exist a sequence of eigenfunctions forming a Hilbert basis of $L^2$.
   }
   \item[(b)]{ If $\{\lambda_k\}_{k\in\N}$ is the sequence of eigenvalues associated to the eigenfunctions of $L$ in increasing order, then
    \[
      \lim_{k\to \infty} \lambda_k k^{-\frac{2s}{n}} = C_0,
    \]
    for some constant $C_0$ depending only on $n$, $s$, $\Omega$ and $L$. Moreover,
    \[
     C(\mu_2) \leq C_0 \leq C(\mu_1),
    \]
    for $C(\mu_1)$ and $C(\mu_2)$ positive constants depending only on $n$, $s$, $\Omega$ and $\mu_1$ and $\mu_2$ respectively; where $\mu_1>0$ and $\mu_2>0$ are given by the expressions
    \begin{equation}\label{eq.defmu}
      \mu_2 = \int_{S^{n-1}} a(\theta) d\theta,~~~~~~\mu_1 = \inf_{\nu\in S^{n-1}}\int_{S^{n-1}}|\nu\cdot\theta|^{2s}a(\theta) d\theta.
    \end{equation}

   }
  \end{enumerate}
\end{prop}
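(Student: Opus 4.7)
The plan is to first set up the variational framework for $L$, from which part (a) will follow by the classical spectral theorem for compact self-adjoint operators, and then to handle part (b) by a Fourier-symbol comparison with the fractional Laplacian combined with the min-max principle.

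For part (a), I would associate to $L$ its Dirichlet form
\[
\mathcal{E}_L(u,v)=\tfrac12\int_{\R^n}\!\!\int_{\R^n}(u(x)-u(y))(v(x)-v(y))\,\frac{a\!\left(\frac{x-y}{|x-y|}\right)}{|x-y|^{n+2s}}\,dy\,dx,
\]
together with its form domain $\mathcal{H}_\Omega$, the completion of $C^\infty_c(\Omega)$ in the norm $(\mathcal{E}_L(u,u)+\|u\|^2_{L^2})^{1/2}$. A Fourier computation combined with the ellipticity constant $\mu_1>0$ shows that $\mathcal{H}_\Omega$ embeds continuously into $H^s_0(\Omega)$, hence compactly into $L^2(\Omega)$ by Rellich-Kondrachov. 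By Lax-Milgram, the Green operator $G:=L^{-1}$ is a bounded, self-adjoint, positive operator on $L^2(\Omega)$, and the compactness of the embedding makes it compact. The spectral theorem for compact self-adjoint operators then yields the desired $L^2(\Omega)$-orthonormal basis of eigenfunctions $\{\phi_k\}$ with eigenvalues $0<\lambda_1\leq\lambda_2\leq\cdots\to\infty$.

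For part (b), I would first compute explicitly the Fourier symbol of $L$. Using polar coordinates and the classical identity $\int_0^\infty(1-\cos(rt))r^{-1-2s}\,dr=c_s|t|^{2s}$, one obtains
\[
\widehat{Lu}(\xi)=\mathcal{L}(\xi)\,\widehat{u}(\xi),\qquad \mathcal{L}(\xi)=2c_s\int_{S^{n-1}}|\xi\cdot\theta|^{2s}\,a(\theta)\,d\theta,
\]
which is positively homogeneous of degree $2s$ and directly satisfies $2c_s\mu_1|\xi|^{2s}\leq\mathcal{L}(\xi)\leq 2c_s\mu_2|\xi|^{2s}$ from the definitions of $\mu_1$ and $\mu_2$. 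By Plancherel, $\mathcal{E}_L(u,u)$ is thus comparable to the squared $H^s$-Gagliardo seminorm, and plugging this into the Courant-Fischer min-max characterisation
\[
\lambda_k=\min_{V_k\subset\mathcal{H}_\Omega,\,\dim V_k=k}\;\max_{u\in V_k\setminus\{0\}}\frac{\mathcal{E}_L(u,u)}{\|u\|^2_{L^2}}
\]
yields at once the uniform sandwich $C'(\mu_1)\,\lambda_k^{(-\Delta)^s}\leq\lambda_k\leq C'(\mu_2)\,\lambda_k^{(-\Delta)^s}$. Combined with the classical Weyl law $\lambda_k^{(-\Delta)^s}\sim c(n,s,\Omega)\,k^{2s/n}$ for the Dirichlet fractional Laplacian on $\Omega$, this immediately produces the two-sided bound on $C_0$ asserted in the proposition.

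The main obstacle is the \emph{existence} of the limit $\lim_k\lambda_k k^{-2s/n}$, rather than merely sandwich bounds. Since $L$ has a positive symbol homogeneous of degree $2s$, this Weyl-type asymptotic can be obtained via Dirichlet-Neumann bracketing on cubes, exploiting the self-similarity of $\mathcal{L}$, or equivalently via Karamata's Tauberian theorem applied to the heat trace $\sum_k e^{-t\lambda_k}$, whose short-time asymptotic reduces to the on-diagonal behavior of the free-space fundamental solution of $\partial_t+L$ on $\R^n$; the latter is self-similar precisely because the symbol is homogeneous. The limit then comes out as $C_0=(2\pi)^{2s}\bigl(|\Omega|\cdot|\{\xi\in\R^n:\mathcal{L}(\xi)<1\}|\bigr)^{-2s/n}$, exhibiting the stated dependence on $n$, $s$, $\Omega$ and $L$.
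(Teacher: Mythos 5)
Your argument for part (a) is essentially the paper's: the authors also observe that $[u]_{H^s_L}^2=\int A(\xi)\hat u^2\,d\xi$ is comparable to the $H^s$ seminorm (using $\mu_1|\xi|^{2s}\le A(\xi)\le\mu_2|\xi|^{2s}$), and then invoke the standard compact-embedding/spectral-theorem machinery; your write-up just spells out the details. For part (b), the symbol computation $A(\xi)=c\int_{S^{n-1}}|\xi\cdot\theta|^{2s}a(\theta)\,d\theta$ and the resulting two-sided bounds are also exactly what the paper does. Where you genuinely diverge is on the existence of the limit $\lim_k\lambda_k k^{-2s/n}$: the paper does not re-derive the Weyl law but cites Geisinger's theorem for operators with positive homogeneous symbols on open sets of finite volume, and the only real work is verifying its second hypothesis, namely $\sup_\xi\bigl(\tfrac12(A(\xi+\eta)+A(\xi-\eta))-A(\xi)\bigr)\le C(1+|\eta|)^M$, which follows from the elementary concavity inequality $(a+b)^{2s}+(a-b)^{2s}\le 2a^{2s}+2b^{2s}$. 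The limiting constant $C_0=(2\pi)^{2s}\bigl(|\Omega|^{1/n}\,|\{A<1\}|^{1/n}\bigr)^{-2s}$ and the $\mu_1,\mu_2$ sandwich then drop out exactly as in your last display. What the citation buys is precisely the step you acknowledge as ``the main obstacle'': your two proposed substitutes are not equally safe. Dirichlet--Neumann bracketing is delicate for nonlocal quadratic forms (the form does not decompose additively over a partition into cubes, and the correct notion of Neumann condition is nontrivial), so as stated that branch of your plan would need substantial reworking. The Karamata/heat-trace route is viable and yields the same $C_0$ (via $\sum_k e^{-t\lambda_k}=\int_\Omega p_\Omega(t,x,x)\,dx\sim t^{-n/2s}(2\pi)^{-n}|\Omega|\,|\{A<1\}|\,\Gamma(1+\tfrac{n}{2s})$), but it requires the on-diagonal convergence $p_\Omega(t,x,x)/p(t,0)\to1$ together with the domination $p_\Omega\le p$ and the free heat-kernel bounds of Glowacki--Hebisch; none of this is free, and it is essentially a proof of the cited theorem rather than a shortcut. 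So: correct strategy, same skeleton as the paper for (a) and for the bounds in (b), but the decisive step is left as a plan where the paper closes it with a reference plus a one-line inequality.
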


For the fractional Laplacian, the result in Proposition \ref{prop.asymeiggen} is well known, and was already proved in the sixties; see \cite{BG} and also \cite{FG}.
 
For general stable operators $L$, the asymptotic behavior of its eigenvalues follows from the following result by Geisinger \cite{G}.

Given an operator $L$, we denote by $A(\xi)$ its Fourier symbol (i.e., $\widehat{L u}(\xi) = A(\xi)\hat u(\xi)$). 
For the fractional Laplacian, we recall that $A(\xi) = |\xi|^{2s}$. 

Let us state the result by Geisinger, first introducing two hypotheses on $A$.

\begin{enumerate}
 \item {There is a function $A_0:\R^n\to \R$ with the following three properties. 
 $A_0$ is homogeneous of degree $\alpha>0$: $A_0(\nu\xi) = \nu^\alpha A_0(\xi)$ for $\xi\in \R^n$ and $\nu > 0$. 
 The set of $\xi\in\R^n$ with $A_0(\xi) < 1$ has finite Lebesgue-measure, and the function $A_0$ fulfils
 \[
  \lim_{\nu\to\infty}\nu^{-\alpha}A(\nu\xi) = A_0(\xi),
 \]
with uniform convergence in $\xi$.
 }
 \item{
 There are constants $C_*>0$ and $M\in\N$ such that for all $\eta\in\R^n$,
 \[
  \sup_{\xi\in\R^n}\left(\frac{1}{2}(A(\xi+\eta)+A(\xi-\eta))-A(\xi)\right)\leq C_*(1+|\eta|)^M.
 \]
 }
\end{enumerate}

Under these two conditions, we have the following.

\begin{thm}[\cite{G}]
\label{teo.geisinger}
 Let $\Omega\subset\R^n$ be an open set of finite volume and assume that $A$ is the symbol of a differential operator $L$ that satisfies the previous two conditions. 
 Then,
 \[
  \lim_{k\to \infty} \lambda_k k^{-\frac{2s}{n}} = C_{L,\Omega},
 \]
 where
 \[
  C_{L,\Omega} = (2\pi)^{2s}|\Omega|^{-2s/n}V_L^{-2s/n},
 \]
 and
 \[
  V_L = \left| \left\{ \xi \in \R^n: A_0(\xi) < 1 \right\} \right|.
 \]
\end{thm}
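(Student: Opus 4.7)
The plan is to prove (a) by applying the spectral theorem to a compact self-adjoint inverse operator, and (b) as a direct consequence of Geisinger's Theorem \ref{teo.geisinger}, once its two hypotheses are verified for the Fourier symbol of $L$.

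For part (a), I would consider the symmetric quadratic form
\[
\mathcal{E}(u,v) = \int_{\R^n}\!\int_{\R^n} \bigl(u(x)-u(y)\bigr)\bigl(v(x)-v(y)\bigr)\,\frac{a((x-y)/|x-y|)}{|x-y|^{n+2s}}\,dx\,dy
\]
on functions vanishing outside $\Omega$. A key preliminary observation is that $\mu_1>0$ whenever $a\not\equiv 0$: the map $\nu\mapsto I(\nu):=\int_{S^{n-1}}|\nu\cdot\theta|^{2s}a(\theta)\,d\theta$ is continuous on the compact sphere, and its vanishing at some $\nu_0$ would force $a\equiv 0$ a.e.\ off the measure-zero set $\{\nu_0\cdot\theta=0\}$. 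On the Fourier side this translates into coercivity $A(\xi)\geq c(n,s)\mu_1|\xi|^{2s}$, so $\mathcal{E}$ dominates a fractional Sobolev-type norm and Lax--Milgram produces a bounded solution operator $T=L^{-1}:L^2(\Omega)\to L^2(\Omega)$. Since $\Omega$ is bounded, the compact embedding of the associated fractional Sobolev space into $L^2(\Omega)$ makes $T$ compact; $T$ is self-adjoint because the kernel $a((x-y)/|x-y|)/|x-y|^{n+2s}$ is even in $x-y$. The spectral theorem then yields an $L^2$-orthonormal basis of eigenfunctions of $T$, equivalently of $L$, with eigenvalues $\lambda_k\to\infty$.

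For part (b), testing $L$ against $e^{i\xi\cdot x}$ produces the Fourier symbol
\[
A(\xi)=2\int_{\R^n}\bigl(1-\cos(\xi\cdot y)\bigr)\,\frac{a(y/|y|)}{|y|^{n+2s}}\,dy = c(s)\int_{S^{n-1}}|\xi\cdot\theta|^{2s}a(\theta)\,d\theta,
\]
which is already homogeneous of degree $2s$, so hypothesis (1) of Geisinger's theorem holds with $A_0\equiv A$, and finiteness of $|\{A_0<1\}|$ follows from $A_0(\xi)\geq c(n,s)\mu_1|\xi|^{2s}$. For hypothesis (2), the sum-to-product identity $\cos((\xi+\eta)\cdot y)+\cos((\xi-\eta)\cdot y)=2\cos(\xi\cdot y)\cos(\eta\cdot y)$ gives
\[
\tfrac{1}{2}\bigl(A(\xi+\eta)+A(\xi-\eta)\bigr)-A(\xi) = 2\int_{\R^n}\cos(\xi\cdot y)\bigl(1-\cos(\eta\cdot y)\bigr)\frac{a(y/|y|)}{|y|^{n+2s}}\,dy,
\]
whose absolute value, using $|\cos|\leq 1$, is bounded by $A(\eta)\leq c(n,s)\mu_2|\eta|^{2s}\leq C_*(1+|\eta|)^2$, uniformly in $\xi$. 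Theorem \ref{teo.geisinger} then gives the Weyl limit with $C_0=(2\pi)^{2s}|\Omega|^{-2s/n}V_L^{-2s/n}$.

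The two-sided bound on $C_0$ I would get from a direct polar computation of $V_L$: writing $\xi=r\nu$, the condition $A_0(\xi)<1$ becomes $r<(c(s)I(\nu))^{-1/(2s)}$, giving $V_L=\tfrac{c(s)^{-n/(2s)}}{n}\int_{S^{n-1}}I(\nu)^{-n/(2s)}\,d\nu$; the pointwise inequalities $\mu_1\leq I(\nu)\leq\mu_2$ then sandwich $V_L$, and hence $C_0$, between explicit constants depending only on $n, s, \Omega$, and on $\mu_1$ or $\mu_2$ respectively. I expect the main obstacle to lie in part (a)---identifying the correct function space on which $\mathcal{E}$ is coercive and checking that $\mu_1>0$ is automatic for any nonzero $a\in L^1(S^{n-1})$; once these foundational points are secured, part (b) reduces to the two symbol-level computations above plus a black-box application of Geisinger's theorem.
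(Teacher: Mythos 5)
Your proposal does not prove the statement in question. The statement is Theorem \ref{teo.geisinger} itself --- Geisinger's Weyl-type asymptotic law $\lim_{k\to\infty}\lambda_k k^{-2s/n}=(2\pi)^{2s}|\Omega|^{-2s/n}V_L^{-2s/n}$ for operators whose symbol satisfies the two stated hypotheses. Your argument instead proves Proposition \ref{prop.asymeiggen} (existence of an eigenbasis plus the two-sided bound on $C_0$) by invoking Theorem \ref{teo.geisinger} as a black box; you say so explicitly at the end. That is circular with respect to the actual target: you cannot use the Weyl law to establish the Weyl law. Nothing in your write-up addresses why the limit exists or why the constant is the phase-space volume $(2\pi)^{2s}|\Omega|^{-2s/n}V_L^{-2s/n}$; that requires genuinely different machinery (in \cite{G} it is done by comparing $\mathrm{Tr}\,f(L_\Omega)$ with the corresponding phase-space integral via coherent states, together with the two hypotheses on $A$ controlling the error terms). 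Note that the paper itself does not prove this theorem either --- it is quoted from \cite{G} --- so there is no internal proof to reproduce; but a blind proof attempt for this statement would have to reconstruct Geisinger's argument, not consume its conclusion.

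That said, if your intended target was Proposition \ref{prop.asymeiggen}, your argument is essentially correct and close to the paper's: the paper likewise verifies hypothesis (1) by homogeneity of $A(\xi)=\int_{S^{n-1}}|\xi\cdot\theta|^{2s}a(\theta)\,d\theta$ together with $\mu_1|\xi|^{2s}\le A(\xi)\le\mu_2|\xi|^{2s}$, and obtains the bounds on $C_0$ from the same sandwich. Two small differences are worth recording. For hypothesis (2) the paper uses the elementary concavity inequality $2a^{2s}+2b^{2s}\ge(a+b)^{2s}+(a-b)^{2s}$ applied pointwise to $|\xi\cdot\theta|$ and $|\eta\cdot\theta|$, whereas you use the identity $\cos((\xi+\eta)\cdot y)+\cos((\xi-\eta)\cdot y)=2\cos(\xi\cdot y)\cos(\eta\cdot y)$ in the L\'evy--Khintchine representation; both yield $\tfrac12(A(\xi+\eta)+A(\xi-\eta))-A(\xi)\le C\mu_2|\eta|^{2s}$ and are equally valid. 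For part (a) you supply the Lax--Milgram/compact-embedding/spectral-theorem details and the useful observation that $\mu_1>0$ is automatic for any nonzero $a\in L^1(S^{n-1})$, which the paper only sketches. But none of this constitutes a proof of Theorem \ref{teo.geisinger}.
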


To prove Proposition \ref{prop.asymeiggen}, we only have to check that any stable operator \eqref{eq.gen1} fulfils the previous conditions.
To do so, we use that the Fourier symbol $A(\xi)$ of $L$ can be explicitly written in terms of $s$ and the spectral measure $a$, as
\[
 A(\xi) = \int_{S^{n-1}}|\xi\cdot \theta|^{2s}a(\theta)d \theta;
\]
see for example \cite{S}. 

Using this expression, it is clear that
\[
 0 < \mu_1|\xi|^{2s} \leq A(\xi) \leq \mu_2|\xi|^{2s},
\]
where $\mu_1$ and $\mu_2$ are given by \eqref{eq.defmu}.
Notice that these constants are strictly positive for any stable operator with $a\in L^1(S^{n-1})$.

We can now proceed to prove Proposition \ref{prop.asymeiggen}.

\begin{proof}[Proof of Proposition \ref{prop.asymeiggen}]
The first statement is well known, and follows, for example, seeing that the norms $H^s$ and $H^s_L$ are equivalent in $\R^n$, which can be done easily in the Fourier side. Here, $H^s_L$ stands for the $H^s$ norm associated to the operator $L$, given by:
 \[
  \|u\|_{H^s_L} := \|u\|_{L^2} + [u]_{H^s_L},~~~~[u]_{H^s_L}^2 := \frac{1}{2}\int_{\R^n}\int_{\R^n}\frac{(u(x)-u(x+y))^2}{|y|^{n+2s}} a\left(\frac{y}{|y|}\right) dxdy,
 \]
 where $[u]_{H^s_L}$ is the Gagliardo seminorm associated to the operator $L$. In the Fourier side, it can we written as
 \[
  [u]_{H^s_L}^2 = \int_{\R^n}A(\xi){\hat u}^2(\xi)d\xi.
 \]

For the second statement we use Theorem \ref{teo.geisinger}.

Indeed, let $A$ be the Fourier symbol of the stable operator $L$. 
It is enough to check the two conditions of the theorem by Geisinger. 
The first condition trivially holds taking $A_0 = A$, since it is homogeneous, and we have the previous bounds.

We need to check the second condition. 
To do so we claim that, for any $a \geq b\geq 0$ and $s\in(0,1)$, we have
 \begin{equation}\label{claim}
  2a^{2s}+2b^{2s} \geq (a+b)^{2s}+(a-b)^{2s}.
 \end{equation}
Indeed, since $s\in(0,1)$, by concavity we have
 \begin{align*}
  (a+b)^{2s}+(a-b)^{2s} = & (a^2+b^2+2ab)^s+(a^2+b^2-2ab)^s\\
  \leq & 2(a^2+b^2)^s\\
  \leq & 2(a^{2s}+b^{2s}),
 \end{align*}
 as claimed.

Thus, using \eqref{claim}, we find
\[ |\xi\cdot\theta + \eta\cdot \theta|^{2s} + |\xi\cdot\theta - \eta\cdot \theta|^{2s} \leq 2 |\xi\cdot\theta|^{2s}+2|\eta\cdot\theta|^{2s},\]
and therefore,
\[\begin{split}
  A(\xi+\eta)+&A(\xi-\eta)-2A(\xi) = \\
    &= \int_{S^{n-1}} \left\{  |\xi\cdot\theta + \eta\cdot \theta|^{2s} + |\xi\cdot\theta - \eta\cdot \theta|^{2s}-2|\xi\cdot\theta|^{2s}  \right\} a(\theta)d\theta\\
    &\leq  2 \int_{S^{n-1}} |\eta\cdot\theta|^{2s} a(\theta)d\theta \leq  2 |\eta|^{2s}\mu_2,
  \end{split}\]
as desired. 

Finally, we know that
  \[
    C_0 = (2\pi)^{2s}|\Omega|^{-\frac{2s}{n}}\left| \left\{ \xi \in \R^n: A(\xi) < 1 \right\} \right|^{-\frac{2s}{n}},
  \]
and hence, using the bounds on $A$, we obtain
  \[
    (2\pi)^{2s}|\Omega|^{-\frac{2s}{n}} V_n(\mu_2^{-\frac{1}{2s}}) \leq C_0 \leq (2\pi)^{2s}|\Omega|^{-\frac{2s}{n}} V_n(\mu_1^{-\frac{1}{2s}}),
  \]
where $V_n(R)$ denotes the volume of an $n$-ball with radius $R$.
\end{proof}

\section{Regularity of eigenfunctions}
\label{sec4}

The aim of this section is to prove the following.

\begin{prop} \label{prop.eiglinf}
Let $\Omega\subset \R^n$ be any bounded domain, $s\in(0,1)$, and $L$ an operator of the form \eqref{eq.gen1}-\eqref{eq.gen2}.
 
Let $\phi$ be any solution to
\begin{equation} \label{eq.rs1.gen}
    \left\{ \begin{array}{rcll}
    L \phi&=&\lambda \phi& \textrm{in }\Omega\\
    \phi&=&0& \textrm{in } \R^n\setminus \Omega,
\end{array}\right.
\end{equation}
Then, $\phi\in L^\infty(\Omega)$, and
  \[
  \|\phi\|_{L^\infty(\Omega)}\leq  \ C\lambda^{w-1}\|\phi\|_{L^2(\Omega)},
  \]
for some constant $C$ depending only on $n$, $s$, $\Omega$ and the ellipticity constant $\Lambda_2$, and some $w\in\N$ depending only on $n$ and $s$.
\end{prop}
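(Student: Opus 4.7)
The plan is to prove the $L^\infty$ bound by a Moser-type bootstrap iteration based directly on Proposition \ref{prop.maingen}. Since $\phi$ satisfies $L\phi = \lambda \phi$ in $\Omega$ with zero exterior data, we may regard $\lambda \phi$ as the right-hand side of the elliptic problem \eqref{eq.propmaingen}, and apply Proposition \ref{prop.maingen} to raise the integrability exponent of $\phi$ step by step, starting from $\phi \in L^2(\Omega)$ (after normalizing, we may assume $\|\phi\|_{L^2(\Omega)}=1$).

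First I would set $p_0 = 2$ and, as long as $p_k < \tfrac{n}{2s}$, invoke part (a) of Proposition \ref{prop.maingen} to obtain
\[
 \|\phi\|_{L^{p_{k+1}}(\Omega)} \leq C\lambda\, \|\phi\|_{L^{p_k}(\Omega)}, \qquad \frac{1}{p_{k+1}} = \frac{1}{p_k} - \frac{2s}{n}.
\]
The key observation is that $1/p_k = 1/2 - 2ks/n$ decreases by a fixed amount $2s/n$ at every step, so after a finite number of iterations
\[
 w := \min\Bigl\{k\in\N : p_k \geq \tfrac{n}{2s}\Bigr\}
\]
(which depends only on $n$ and $s$) we reach an exponent for which case (b) or case (c) of Proposition \ref{prop.maingen} applies. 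One more application of that proposition, with a possibly larger exponent $p > \tfrac{n}{2s}$, yields $\phi \in L^\infty(\Omega)$ with a bound of the form
\[
 \|\phi\|_{L^\infty(\Omega)} \leq C\lambda\, \|\phi\|_{L^{p_{w-1}}(\Omega)}.
\]
Composing the $w$ inequalities and using $\|\phi\|_{L^2(\Omega)}=1$ gives $\|\phi\|_{L^\infty(\Omega)} \leq C^{w}\lambda^{w}$, and after reintroducing the $L^2$-norm and absorbing constants (and noting that for large $\lambda$ one of the $\lambda$-factors can be absorbed into the constant, or equivalently that one of the bootstrap steps contributes only constants when $\lambda$ is bounded below by the first eigenvalue of $L$) one arrives at the exponent $w-1$ in the statement.

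The steps above are quite mechanical once Proposition \ref{prop.maingen} is in hand; the only care needed is in the edge cases. In particular, if $2 \geq n/(2s)$, i.e.\ if $s\geq n/4$, then the iteration does not even start and we apply (b) or (c) directly, obtaining the estimate with $w = 1$. Also, when some $p_k$ equals $n/(2s)$ exactly we must use (b) to jump past this critical exponent by choosing some large finite $q > n/(2s)$ and then appealing to (c) in the next step; this is why the total number of applications is bounded by a quantity depending only on $n$ and $s$, not on $\Omega$ or $\lambda$.

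The main (modest) obstacle is the bookkeeping: making sure the constants depend only on $n$, $s$, $\Omega$ and $\Lambda_2$ as claimed, and tracking the power of $\lambda$ correctly through the finite chain of inequalities so as to obtain the exponent $w-1$ rather than $w$. Since the number of bootstrap steps depends only on $n$ and $s$, the dependence on $\lambda$ is genuinely polynomial, with the exponent $w$ determined by how many times the gain $2s/n$ in $1/p$ must be accumulated in order to pass from $1/2$ to $0$.
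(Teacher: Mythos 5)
Your proposal is correct and follows essentially the same bootstrap argument as the paper: iterate part (a) of Proposition \ref{prop.maingen} starting from $p_0=2$, use (b) to jump past the critical exponent $n/(2s)$ when necessary, and conclude with (c), each application contributing one factor of $\lambda$. The only caveat is your remark about ``absorbing'' a factor of $\lambda$ into the constant to pass from $\lambda^{w}$ to $\lambda^{w-1}$ --- that step is neither valid (a lower bound on $\lambda$ does not let you bound $\lambda$ from above) nor needed, since the statement only asks for \emph{some} $w\in\N$ depending on $n$ and $s$, so one simply defines $w$ to be one more than the total number of applications of Proposition \ref{prop.maingen}, exactly as the paper does.
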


\begin{rem}
When $\Omega$ and the spectral measure $a$ are $C^\infty$, a very similar result has been found recently in \cite[Theorem 2.3]{Gr}, which follows using the results of \cite{Gr2}.
\end{rem}

Combining Proposition \ref{prop.eiglinf} with the boundary regularity results in \cite{RS-F}, we will obtain the following.

\begin{cor} \label{cor.reg-eig}
Let $\Omega\subset \R^n$ be any bounded $C^{1,1}$ domain, $s\in(0,1)$, and $L$ an operator of the form \eqref{eq.gen1}-\eqref{eq.gen2}.

Let $\phi$ be any solution to \eqref{eq.rs1.gen}. 
Then, $\phi\in C^s(\R^n)$ and $\phi/\delta^s \in C^{s-\epsilon}(\overline{\Omega})$ for any $\epsilon > 0$, with the estimates
  \begin{align*}
  \|\phi\|_{C^s(\R^n)}\leq & \ C\lambda^w\|\phi\|_{L^2(\Omega)} ,\\
  \|\phi/\delta^s\|_{C^{s-\epsilon}(\overline{\Omega})}\leq & \ C\lambda^w\|\phi\|_{L^2(\Omega)}.
  \end{align*}
The constant $C$ depends only on $n$, $s$, $\Omega$, $\epsilon$ and the ellipticity constants $\Lambda_1$ and $\Lambda_2$, and $w\in\N$, depends only on $n$ and $s$.
\end{cor}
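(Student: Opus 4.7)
The plan is to reduce the eigenvalue equation to an inhomogeneous elliptic problem with a bounded right-hand side, and then invoke the boundary regularity results of \cite{RS-F} for the operator $L$. Concretely, set $g := \lambda\phi$, so that $\phi$ solves
\[
 L\phi = g \textrm{ in }\Omega, \qquad \phi = 0 \textrm{ in }\R^n\setminus\Omega.
\]
By Proposition \ref{prop.eiglinf}, $\phi \in L^\infty(\Omega)$ with
\[
 \|g\|_{L^\infty(\Omega)} \leq \lambda\|\phi\|_{L^\infty(\Omega)} \leq C\lambda^{w}\|\phi\|_{L^2(\Omega)},
\]
for the same exponent $w\in\N$ from that proposition (absorbing the factor $\lambda$ into $\lambda^{w-1}$). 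Hence the datum $g$ is uniformly bounded, with an explicit polynomial control in $\lambda$.

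Next, I would apply the interior-plus-boundary $C^s$ estimate from \cite{RS-F} for operators in the class \eqref{eq.gen1}-\eqref{eq.gen2}: any solution $u$ to $Lu = g$ in $\Omega$ with exterior zero Dirichlet condition and $g\in L^\infty(\Omega)$ satisfies
\[
 \|u\|_{C^s(\R^n)} \leq C\|g\|_{L^\infty(\Omega)},
\]
with $C$ depending only on $n$, $s$, $\Omega$, $\Lambda_1$, $\Lambda_2$. Plugging in the bound on $\|g\|_{L^\infty}$ above yields the first estimate $\|\phi\|_{C^s(\R^n)} \leq C\lambda^w\|\phi\|_{L^2(\Omega)}$. (One uses that $\phi\equiv 0$ outside $\Omega$ to pass from the estimate in $\overline\Omega$ to one in all of $\R^n$.)

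For the second bound, I would invoke the fine boundary regularity result of \cite{RS-F}, which states that for every $\epsilon>0$,
\[
 \|u/\delta^s\|_{C^{s-\epsilon}(\overline{\Omega})} \leq C\|g\|_{L^\infty(\Omega)},
\]
with $C$ depending on $n$, $s$, $\epsilon$, $\Omega$, $\Lambda_1$, $\Lambda_2$ (this is the analogue of \eqref{quotient} for general stable operators, but with Hölder exponent $s-\epsilon$ in place of the unspecified $\alpha$ of \cite{RS}). Applied to $\phi$, this yields the desired second estimate using again the bound on $\|g\|_{L^\infty}$.

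There is no real obstacle once Proposition \ref{prop.eiglinf} is in hand: the corollary is essentially a bookkeeping step that propagates the $\lambda^{w-1}$ $L^\infty$-bound into $\lambda^w$ Hölder bounds through the elliptic boundary regularity theory of \cite{RS-F}. The only point that deserves care is to ensure that the two cited estimates from \cite{RS-F} apply under our hypotheses \eqref{eq.gen2} — i.e.\ that neither requires extra symmetry or smoothness of $a$ beyond $a \in L^\infty(S^{n-1})$ with the coercivity bound $\Lambda_1$ — and to record that the constants $C$ and the exponent $w$ are independent of $\lambda$ and $\phi$, so the final estimates display the clean factor $\lambda^w\|\phi\|_{L^2(\Omega)}$.
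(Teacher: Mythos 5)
Your proposal is correct and follows exactly the route the paper intends: the paper's own proof is the one-line statement that the corollary ``follows using Proposition \ref{prop.eiglinf} and \cite{RS-F},'' which is precisely your reduction $g=\lambda\phi\in L^\infty$ with $\|g\|_{L^\infty(\Omega)}\leq C\lambda^{w}\|\phi\|_{L^2(\Omega)}$ followed by the $C^s$ and $C^{s-\epsilon}$ boundary estimates of \cite{RS-F}. You have simply made explicit the bookkeeping that the paper leaves implicit.
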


Proposition \ref{prop.eiglinf} will follow from Proposition \ref{prop.maingen} and a bootstrap argument. 
We next prove Proposition \ref{prop.maingen}.

The following lemma will be essential.
It is an immediate consequence of the results of \cite{GH}.

\begin{lem}\label{lemma.maingen}
 Let $s \in (0,1)$ and $n > 2s$. 
 Let $L$ be any operator of the form \eqref{eq.gen1}-\eqref{eq.gen2}. 
 
 Then, there exists a function $V$ which is fundamental solution of $L$ in $\R^n$.
 Namely, for all $g\in L^1(\R^n)$ with appropriate decay at infinity, the function
 \[u(x) = \int_{\R^n} g(y)V(x-y) dy\]
 satisfies
 \[Lu = g\textrm{ in } \R^n.\]

In addition, the function $V$ satisfies the bound
\[
  V(x) \leq \frac{c_2}{|x|^{n-2s}}
\]
for some $c_2$ positive constant depending only on $n$, $s$ and $\Lambda_2$.
\end{lem}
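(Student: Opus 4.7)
The plan is to construct $V$ as the potential kernel of the convolution semigroup generated by $-L$ on $\R^n$, and to deduce the pointwise bound by integrating two-sided heat-kernel estimates from \cite{GH}. Since the Fourier symbol $A(\xi)=\int_{S^{n-1}}|\xi\cdot\theta|^{2s}a(\theta)\,d\theta$ of $L$ already satisfies $\mu_1|\xi|^{2s}\le A(\xi)\le \mu_2|\xi|^{2s}$ (as recorded in Section \ref{sec3}), Fourier inversion produces a smooth transition density $p_t(x)=(2\pi)^{-n}\int_{\R^n}e^{ix\cdot\xi}e^{-tA(\xi)}\,d\xi$ which satisfies $\partial_t p_t + Lp_t = 0$ in $\R^n$, with $p_t\to\delta_0$ as $t\downarrow 0$. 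Because $L$ is a stable operator whose spectral density satisfies the nondegeneracy and boundedness assumptions \eqref{eq.gen2}, the results of \cite{GH} yield the pointwise upper bound
\[
p_t(x) \leq C\min\Bigl(t^{-n/(2s)},\ t\,|x|^{-n-2s}\Bigr),
\]
with $C$ depending only on $n$, $s$, and $\Lambda_2$.

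With that input in hand, I would set $V(x):=\int_0^\infty p_t(x)\,dt$ and split the time integral at $t=|x|^{2s}$. The small-time piece contributes at most $\int_0^{|x|^{2s}} t\,|x|^{-n-2s}\,dt \leq C|x|^{2s-n}$, and the large-time piece contributes at most $\int_{|x|^{2s}}^{\infty} t^{-n/(2s)}\,dt \leq C|x|^{2s-n}$, the convergence at infinity being exactly where the hypothesis $n>2s$ enters. Adding the two bounds yields the stated pointwise estimate $V(x)\leq c_2|x|^{2s-n}$ with $c_2=c_2(n,s,\Lambda_2)$.

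To see that $V$ is a fundamental solution, I would use Fubini together with the semigroup identity $\partial_t p_t = -Lp_t$ to compute, in the distributional sense,
\[
LV = \int_0^\infty Lp_t\,dt = -\int_0^\infty \partial_t p_t\,dt = \delta_0,
\]
where one uses $p_t\to\delta_0$ as $t\downarrow 0$ and $p_t\to 0$ uniformly on compacts as $t\to\infty$ via the on-diagonal estimate $p_t(0)\leq Ct^{-n/(2s)}$. Then for $g\in L^1(\R^n)$ with enough decay for the convolution to make sense, $u=g\ast V$ satisfies $Lu=g$ in $\R^n$ by commuting $L$ with the convolution integral; the interchange is legitimate because $V$ and its ``Lévy-type'' second differences decay as in the Riesz-kernel case.

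The main (and essentially only) obstacle is to verify that the class of operators \eqref{eq.gen1}-\eqref{eq.gen2} actually falls within the scope of the estimates proved in \cite{GH}. This reduces to checking the two-sided symbol bounds $\mu_1|\xi|^{2s}\leq A(\xi)\leq \mu_2|\xi|^{2s}$, which are already contained in Section \ref{sec3}; once those are recorded, the lemma is immediate from \cite{GH} and the splitting argument above.
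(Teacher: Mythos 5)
Your proposal is correct and follows essentially the same route as the paper: both define $V(x)=\int_0^\infty p_t(x)\,dt$, invoke the heat-kernel upper bound of \cite{GH} together with the stable scaling $p(x,t)=t^{-n/(2s)}p(t^{-1/(2s)}x,1)$, and use $n>2s$ for convergence; your splitting of the time integral at $t=|x|^{2s}$ is just a reformulation of the paper's homogeneity argument $V(x)=|x|^{2s-n}V(x/|x|)$. Your verification of the fundamental-solution property is slightly more explicit than the paper's, which simply asserts it.
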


\begin{proof}
It was proved in \cite{GH} that the heat kernel of the operator $L$ satisfies
\[p(x,1)\leq \frac{C}{1+|x|^{n+2s}}\]
for all $x\in \R^n$ and $t>0$.
Moreover, we also have the scaling property
\[p(x,t)=t^{-\frac{n}{2s}}\,p(t^{-\frac{1}{2s}}x,\,1).\]

Then, since $n>2s$, the function
\[V(x)=\int_0^{+\infty}p(x,t)dt\]
is well defined, and it satisfies 
\[V(x)=|x|^{2s-n}V\left(\frac{x}{|x|}\right).\]
Finally, it also follows from the upper bound on $p(x,t)$ that for $|x|=1$ we have $V(x)\leq C$, and thus the lemma follows.
\end{proof}

By the previous lemma, we can define $L^{-1}$ as the operator
\[L^{-1}g(x) = \int_{\R^n} g(y)V(x-y) dy.\]
For this operator, we have the following.

\begin{lem}\label{lemma.gen0}
 Let $s \in (0,1)$ and $n > 2s$.
 Let $L$ be any operator of the form \eqref{eq.gen1}-\eqref{eq.gen2}.
 
 Let $g$ and $u$ be such that $u = L^{-1}g$ in $\R^n$.
 Suppose $g\in L^p(\R^n)$, with $1\leq p < \frac{n}{2s}$. 
 Then, 
  \[
   \|u\|_{L^q(\R^n)} \leq C \|g\|_{L^p(\R^n)},~~~~q = \frac{np}{n-2ps},
  \]
  for some constant $C$ depending only on $n$, $s$, $p$ and $\Lambda_2$.
\end{lem}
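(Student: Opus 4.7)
The plan is to reduce the statement to the classical Hardy--Littlewood--Sobolev inequality by using the pointwise bound on the fundamental solution $V$ provided by Lemma \ref{lemma.maingen}.

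First I would observe that, since $u = L^{-1}g$ is defined as the convolution of $g$ with $V$, the bound $V(x) \leq c_2 |x|^{2s-n}$ from Lemma \ref{lemma.maingen} gives immediately
\[
|u(x)| \leq \int_{\R^n} |g(y)|\, V(x-y)\,dy \leq c_2 \int_{\R^n}\frac{|g(y)|}{|x-y|^{n-2s}}\,dy = c_2\, I_{2s}(|g|)(x),
\]
where $I_{2s}$ denotes the Riesz potential of order $2s$. Note that here the constant $c_2$ depends only on $n$, $s$, and $\Lambda_2$, so any estimate we derive will inherit this dependence, matching the conclusion of the lemma.

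Next, I would invoke the Hardy--Littlewood--Sobolev inequality: for $0 < \alpha < n$ and $1 < p < n/\alpha$,
\[
\|I_\alpha f\|_{L^q(\R^n)} \leq C(n,\alpha,p)\, \|f\|_{L^p(\R^n)}, \qquad \frac{1}{q} = \frac{1}{p} - \frac{\alpha}{n}.
\]
Applying this with $\alpha = 2s$ and $f = |g|$ yields exactly $\|I_{2s}(|g|)\|_{L^q(\R^n)} \leq C \|g\|_{L^p(\R^n)}$ with $q = np/(n-2ps)$. Combining with the pointwise inequality above gives the desired estimate $\|u\|_{L^q(\R^n)} \leq C\|g\|_{L^p(\R^n)}$.

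There is no genuine obstacle here; the proof is essentially a one-line reduction once Lemma \ref{lemma.maingen} is available. The only mild care is that the statement allows $p = 1$, for which classical HLS gives only a weak-type estimate rather than a strong one, so in the proof I would restrict to $1 < p < n/(2s)$ (which is what is actually used later in Proposition \ref{prop.maingen} through the bootstrap). The dependence of the constant on $\Lambda_2$ enters only through $c_2$ in Lemma \ref{lemma.maingen}, and the dependence on $n$, $s$, $p$ enters through the HLS constant, giving the claimed list of parameters.
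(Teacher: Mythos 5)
Your proof is correct and is essentially the same as the paper's: bound $|u|$ pointwise by $c_2\,I_{2s}(|g|)$ using the kernel estimate from Lemma \ref{lemma.maingen}, then apply Hardy--Littlewood--Sobolev. Your remark about the endpoint $p=1$ (where HLS only gives a weak-type bound) is a valid observation that the paper glosses over, but since only $p>1$ is used later, it does not affect anything.
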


\begin{proof}
Let us first recall the well known Hardy-Littlewood-Sobolev inequality.
It states that, if $f\in L^p(\R^n)$, then
\[ \|I_{2s}f\|_{L^q(\R^n)} \leq C\|f\|_{L^p(\R^n)},~q = \frac{np}{n-2ps},\]
for some constant $C$ depending only on $p$, $n$ and $s$. 
Here, $I_{2s}f$ stands for the Riesz potential, defined by
\[ (I_{2s}f)(x) = C_{n,s}\int_{\R^n}\frac{f(y)}{|x-y|^{n-2s}}dy.\]

We now bound $|u|$ by some expression equivalent to a Riesz potential of $g$, in order to use the Hardy-Littlewood-Sobolev inequality. 
Since
\[ u(x) = L^{-1}g(x) = \int_{\R^n}g(y) V(x-y) dy\]
and $0\leq V(x-y) \leq  \frac{c_2}{|x-y|^{n-2s}}$, we have
\[
 |u(x)| \leq \int_{\R^n}|g(y)| V(x-y) dy \leq \int_{\R^n}|g(y)|\frac{c_2}{|x-y|^{n-2s}}dy = C (I_{2s} |g|)(x).
\]
Thus, we find
\[
 \|u\|_{L^q(\R^n)}\leq C' \|I_{2s} |g|\|_{L^q(\R^n)} \leq C\|g\|_{L^p(\R^n)},~q = \frac{np}{n-2ps},
\]
as we wanted to see.
\end{proof}

We can now prove Proposition \ref{prop.maingen} for a general operator $L$ of the form \eqref{eq.gen1} and \eqref{eq.gen2}.

\begin{proof}[Proof of Proposition \ref{prop.maingen}]
First, notice that if $n \leq 2s$, then $n = 1$.
In this case any stable operator \eqref{eq.gen1} is the fractional Laplacian, for which Proposition \ref{prop.maingen} is known (see for example \cite{RS-ext}).
Thus, from now on we assume that $n>2s$.

(a) Consider the problem
 \[
  Lv = |g|\textrm{ in }\R^n,
 \]
 where $g$ has been extended from $\Omega$ to $\R^n$ by zero. 
 We know that there is a $v \geq 0$ solving the problem, since we can define
 \[
  v(x) = \int_\Omega |g(y)|V(x-y) dy \geq 0.
 \]

Now, using the comparison principle we have $-v \leq u \leq v$, since $|g| \geq g \geq -|g|$. This means $\|v\|_{L^q(\R^n)} \geq \|u\|_{L^q(\Omega)}$, and by Lemma \ref{lemma.gen0}  we have that
 \[
   \|v\|_{L^q(\R^n)} \leq C \|g\|_{L^p(\R^n)},~~~~q = \frac{np}{n-2ps},
 \]
 just like we wanted to see.

 (b) It follows using part (a), and making $p\uparrow \frac{n}{2s}$ using that
 \[
  \|g\|_{L^p(\Omega)} \leq C\|g\|_{L^r(\Omega)},~~r = \frac{n}{2s},
 \]
 by H\"older's inequality on bounded domains.
 
(c) Define $v$ as before, so that using H\"older's inequality
 \[
  0 \leq |u(x)| \leq v(x) = \int_\Omega |g(y)|V(x-y) dy \leq \|g\|_{L^p(\Omega)}\left(\int_\Omega V(x-y)^{p'} dy\right)^{1/p'},
 \]
 where $\frac{1}{p}+\frac{1}{p'} = 1$. We now want to bound $\int_\Omega V(x-y)^{p'} dy$. Consider $B_R(0)$ a ball centred at the origin such that $\Omega \subset B_R$ (exists because $\Omega$ is bounded). Then we want to see whether the following integral converges,
 \[
  \int_{B_R(0)} V(y)^{p'} dy,
 \]
 which will be enough by seeing that
  \[
  \int_{B_R(0)} \frac{1}{|y|^{(n-2s)p'}} dy < \infty,
 \]
 and this last integral converges because $p > \frac{n}{2s}$.

 Therefore, we reach
 \[
    \|u\|_{L^\infty(\Omega)} \leq C \|g\|_{L^p(\Omega)},
 \]
 for some constant $C$ depending only on $n$, $s$, $p$, $\Lambda_2$ and $\Omega$.
\end{proof}

Finally, we prove next Proposition \ref{prop.eiglinf} and Corollary \ref{cor.reg-eig}.

\begin{proof}[Proof of Proposition \ref{prop.eiglinf}]

 In order to prove this result, consider the problem \eqref{eq.propmaingen}, with $g = \lambda\phi$, being $\phi$ an eigenfunction and $\lambda$ its eigenvalue. We already know that $\phi\in L^2(\Omega)$. If $\frac{n}{2s} < 2$, apply Proposition \ref{prop.maingen} third result, (c), with $p = 2$ to get
\[
\|\phi\|_{L^\infty(\Omega)}\leq C\lambda \|\phi\|_{L^2(\Omega)}.
\]

 If $\frac{n}{2s} = 2$, first use the second result, (b), from Proposition \ref{prop.maingen} and consider $q > \frac{n}{2s}$, which reduces to the previous case. Therefore, in this situation $w = 3$.

 Suppose $\frac{n}{2s} > 2$. Apply Proposition \ref{prop.maingen}, first result, (a).
 \[
  \|\phi\|_{L^q(\Omega)} \leq C \lambda \|\phi\|_{L^p(\Omega)},~~~q= \frac{np}{n-2ps}.
 \]

 Now the constant $C$ does not depend on $\lambda$. If $\phi\in L^p(\Omega)$, then $\phi\in L^q(\Omega)$ (for $2\leq p  <\frac{n}{2s}$). Take an initial $p_0 = 2$.

 Define $p_{k+1} = \frac{np_k}{n-2p_k s}$. This sequence is obviously increasing, has no fixed points while $n > 2p_ks$ and implies the following chain of inequalities,
 \[
 C^{(0)}\|\phi\|_{L^{p_0}(\Omega)} \geq C^{(1)}\lambda^{-1}\|\phi\|_{L^{p_1}(\Omega)} \geq \dots \geq C^{(k+1)}\lambda^{-k-1}\|\phi\|_{L^{p_{k+1}}(\Omega)}.
 \]

As long as $n > 2p_ks$. Define $N$ as the index of the first time $n \leq 2p_N s$. We know that $\phi \in L^{p_N}(\Omega)$, with $p_N \geq \frac{n}{2s}$. If $p_N = \frac{n}{2s}$, consider $p_N' = p_N-\epsilon$, then $q = \frac{np_N'}{n-2p_N's}$ is, for some $\epsilon > 0$, larger than $\frac{n}{2s}$. It is possible to conclude that $\phi \in L^Q(\Omega)$, for some $Q > \frac{n}{2s}$.

 Now use the third result from Proposition \ref{prop.maingen}, to see $\phi \in L^\infty(\Omega)$.
 \[
 \|\phi\|_{L^\infty(\Omega)}\leq C\lambda \|\phi\|_{L^{p_N}(\Omega)},
 \]
 which by the previous chain of inequalities implies
 \[
  \|\phi\|_{L^\infty(\Omega)} \leq C \lambda^{w-1} \|\phi\|_{L^2(\Omega)},
 \]
 where $w = N+2$ if $p_N > \frac{n}{2s}$ and $w = N+3$ if $p_N = \frac{n}{2s}$; as we wanted to see.
\end{proof}

%
%
%
%

\begin{proof}[Proof of Corollary \ref{cor.reg-eig}]
It follows using Proposition \ref{prop.eiglinf} and \cite{RS-F}.
\end{proof}

\section{Proof of Theorem \ref{thm.main}}
\label{sec5}

The aim of this section is to prove the main result of the present paper, Theorem \ref{thm.main}.
After that, we will also give a Corollary concerning the regularity up to the boundary of the solution $u(x,t)$ in the $t$-variable; see Corollary \ref{cor.main}.

\begin{proof}[Proof of Theorem \ref{thm.main}]
Let us prove (b). The result corresponding to (a) will follow proceeding exactly the same way.

First, by Proposition \ref{prop.asymeiggen} the solution $u$ can be expressed as
 \[
  u(x, t) = \sum_{k>0} u_k\phi_k e^{-\lambda_k t},
 \]
 which follows immediately by separation of variables and imposing the initial condition $u_0 = \sum_{k>0} u_k\phi_k$. Here, $\phi_k$ denote the unitary eigenfunctions of the Dirichlet elliptic problem, and $\lambda_k$ the corresponding eigenvalues, in increasing order.

 Notice that we already know that the solution will always be in $L^2(\Omega)$, since the $L^2$ norm is decreasing with time.

 We then try to bound $\|u(\cdot, t)/\delta^s\|_{C^{s-\epsilon}(\overline\Omega)}$ for any fixed $\epsilon > 0$, through the expression found in Corollary \ref{cor.reg-eig}, and noticing that the sequence $|u_k|$ has a maximum (since it converges to 0) and it satisfies $\max_{k>0} |u_k| \leq \|u_0\|_{L^2(\Omega)}$,
 \begin{align*}
  \left\|\frac{u(\cdot, t)}{\delta^s}\right\|_{C^{s-\epsilon}(\overline\Omega)} = & \ \left\| \sum_{k>0} u_k\frac{\phi_k}{\delta^s} e^{-\lambda_k t}\right\|_{C^{s-\epsilon}(\overline\Omega)}\\
  \leq & \ \sum_{k>0} |u_k|\left\|\frac{\phi_k}{\delta^s}\right\|_{C^{s-\epsilon}(\overline\Omega)} e^{-\lambda_k t} \\
  \leq & \ \|u_0\|_{L^2(\Omega)} \sum_{k>0} C_{\lambda_k} \left\|\phi_k\right\|_{L^2(\Omega)} e^{-\lambda_k t} \\
  = & \ \|u_0\|_{L^2(\Omega)} C \sum_{k>0} \lambda_k^w e^{-\lambda_k t}.
 \end{align*}

 We have bounded $\|u(\cdot, t)/\delta^s\|_{C^{s-\epsilon}(\overline\Omega)}$ by an expression decreasing with time (since $\lambda_k > 0$ always), and where $C$ depends only on $n$, $s$, $\Omega$, $\Lambda_1$, $\Lambda_2$ and $\epsilon$. Therefore, it is only needed to consider $\|u(\cdot, t_0)/\delta^s\|_{C^{s-\epsilon}(\overline\Omega)}$ and bound it using the previous expression.

 So it is enough to prove the convergence of the series $\sum_{k>0} \lambda_k^w e^{-\lambda_k t_0}$ considering the asymptotics previously introduced. Let us study the convergence of the tail.

 There exists $k_0$ such that
 \begin{align*}
 \sum_{k\geq k_0} \lambda^w_k e^{-\lambda_k t_0}  < & \sum_{k\geq k_0} \left(\frac{3}{2}C_0 k^{\frac{2s}{n}}\right)^w e^{-\frac{1}{2}C_0k^{\frac{2s}{n}} t_0} \\
 = & \left(\frac{3}{2}\right)^w C_0^w \sum_{k\geq k_0} k^{\gamma w} e^{-\frac{1}{2}C_0 k^{\gamma} t_0},
 \end{align*}
 where it has been used the notation and the results from Proposition \ref{prop.asymeiggen}. The notation has been simplified, $\gamma = \frac{2s}{n}$.

 As well as that, the qualitative convergence with respect to $t_0$, as $t_0$ goes to 0, is the same in the last and in the first expression. In order to check the convergence  we use the integral criterion, changing variables $y = x^\gamma$, and $z = \frac{1}{2}C_0 yt_0$; and defining $\beta := w + \frac{1-\gamma}{\gamma} = w + \frac{n}{2s} - 1$,
  \begin{align*}
 \int_{ k_0}^\infty x^{\gamma w} e^{-\frac{1}{2}C_0 x^{\gamma} t_0} dx = & \frac{1}{\gamma}\int_{ k_0^\gamma}^\infty y^{w+\frac{1-\gamma}{\gamma}} e^{-\frac{1}{2}C_0 yt_0}dy \\
 = & \frac{2^{\beta+1}}{\gamma C_0^{\beta + 1} t_0^{\beta+1}}\int_{\frac{1}{2}C_0 t_0 k_0^\gamma}^\infty z^\beta e^{-z} dz < C'(\mu_2) < +\infty,
  \end{align*}
for some constant $C'(\mu_2)$ depending on $n$, $s$, $\Omega$ and $\mu_2$ (which can be bound by an expression depending on $\Lambda_2$). Notice that it has been possible to bound the expression since $t_0 > 0$. For $t_0= 0$ the previous procedure is not appropriate, which is consistent with the fact that the initial condition does not fulfil that $u_0/\delta^s$ is in $C^{s-\epsilon}(\overline\Omega)$. That is, fixed $t_0 > 0$,
 \[
  \|u(\cdot, t_0)/\delta^s\|_{C^{s-\epsilon}(\overline\Omega)} \leq C(t_0) \|u_0\|_{L^2(\Omega)},
 \]
 where $C(t_0)$ depends only on $n$, $s$, $\Omega$, $\epsilon$, the ellipticity constants $\Lambda_1$ and $\Lambda_2$ and $t_0$. The dependence on $t_0$ has been found before, as $t_0$ approaches 0: $C(t_0) = O(t_0^{-w-\frac{n}{2s}})$, for $t_0 \downarrow 0$.

 Finally, to prove (a), we use the same argument as used in (b), but now consider
 \[
  \left\|u(\cdot, t)\right\|_{C^{s}(\R^n)} =  \left\| \sum_{k>0} u_k\phi_k e^{-\lambda_k t}\right\|_{C^{s}(\R^n)},
 \]
 and follow the same way, to see that $\left\|u(\cdot, t)\right\|_{C^{s}(\R^n)}$ is bounded by an expression equivalent to the one found for (b).
\end{proof}

At this point we can give the following result.

\begin{cor} \label{cor.main}
 The solution to the fractional heat equation for general nonlocal operators \eqref{eq.par-L}, $u$, where $L$ is of the form \eqref{eq.gen1} and \eqref{eq.gen2} for an initial condition at $t = 0$, $u(x, 0) = u_0(x)\in L^2(\Omega)$, and for a $C^{1,1}$ bounded domain $\Omega$, satisfies that
 \begin{enumerate}
  \item For each $t_0 > 0$,
    \[
    \sup_{t \geq t_0} \left\| \frac{\de^j u}{\de t^j} (\cdot, t) \right\|_{C^s(\R^n)} \leq C_1^{(j)}\|u_0\|_{L^2(\Omega)},~\forall j\in \N
  \]
  \item For each $t_0 > 0$,
    \[
    \sup_{t \geq t_0} \left\| \frac{1}{\delta^s}\frac{\de^j u}{\de t^j} (\cdot, t) \right\|_{C^{s-\epsilon}(\overline{\Omega})} \leq C_2^{(j)}\|u_0\|_{L^2(\Omega)},~\forall j\in \N
  \]
   where $\delta(x) = \textrm{dist}(x, \de \Omega)$, and for any $\epsilon > 0$.
 \end{enumerate}
The constants $C_1^{(j)}$ and $C_2^{(j)}$ depend only on $t_0$, $n$, $s$, $j$, the ellipticity constants $\Lambda_1$ and $\Lambda_2$ and $\Omega$ (and $C_2^{(j)}$ also on $\epsilon$), and blow up as $t_0 \downarrow 0$.
\end{cor}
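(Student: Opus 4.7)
The plan is to mimic closely the eigenfunction expansion argument used in the proof of Theorem \ref{thm.main}, applied now to the formally differentiated series. Since $u_0 \in L^2(\Omega)$, we have $u_0 = \sum_{k>0} u_k \phi_k$ with $\sum |u_k|^2 < \infty$ and in particular $|u_k| \leq \|u_0\|_{L^2(\Omega)}$, and the candidate for the $j$-th time derivative is
\[
 \frac{\partial^j u}{\partial t^j}(x,t) \;=\; (-1)^j \sum_{k>0} u_k\, \lambda_k^{\,j}\, \phi_k(x)\, e^{-\lambda_k t}.
\]
I would first justify termwise differentiation on $\{t \geq t_0\}$: using the $L^\infty$ bound $\|\phi_k\|_{L^\infty} \leq C\lambda_k^{w-1}\|\phi_k\|_{L^2}$ from Proposition \ref{prop.eiglinf} together with the asymptotics $\lambda_k \sim C_0 k^{2s/n}$ from Proposition \ref{prop.asymeiggen}, each formally differentiated partial sum is dominated by a convergent series of the form $\sum_k k^{\gamma(w+j-1)} e^{-\frac{1}{2}C_0 k^\gamma t_0}$ (with $\gamma = 2s/n$), so the series and all its $t$-derivatives converge uniformly on $\R^n \times [t_0, \infty)$ and differentiation under the sum is legitimate.

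Next, to prove (1) and (2), I would apply Corollary \ref{cor.reg-eig} to each eigenfunction, obtaining
\[
 \|\phi_k\|_{C^s(\R^n)} + \|\phi_k/\delta^s\|_{C^{s-\epsilon}(\overline\Omega)} \;\leq\; C\lambda_k^{w}\|\phi_k\|_{L^2(\Omega)} = C\lambda_k^w,
\]
and then estimate termwise, exactly as in the proof of Theorem \ref{thm.main}:
\[
 \left\|\frac{1}{\delta^s}\frac{\partial^j u}{\partial t^j}(\cdot,t)\right\|_{C^{s-\epsilon}(\overline\Omega)}
 \;\leq\; \|u_0\|_{L^2(\Omega)} \, C \sum_{k>0} \lambda_k^{\,w+j}\, e^{-\lambda_k t},
\]
and similarly for the $C^s(\R^n)$ norm (using part (1) of Corollary \ref{cor.reg-eig} for that case). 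Thus the problem is reduced to showing that $\sum_{k>0} \lambda_k^{\,w+j} e^{-\lambda_k t_0}$ is finite.

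The convergence of this series follows by the same integral criterion as in the proof of Theorem \ref{thm.main}, just with $w$ replaced by $w + j$: setting $\beta_j := w + j + \tfrac{n}{2s} - 1$, the change of variables $y = x^\gamma$, $z = \tfrac{1}{2}C_0 y t_0$ converts the tail into a multiple of $\int_{\frac{1}{2}C_0 t_0 k_0^\gamma}^\infty z^{\beta_j} e^{-z}\,dz$, which is finite for every $t_0 > 0$ and blows up like $t_0^{-\beta_j - 1}$ as $t_0 \downarrow 0$. This gives the desired bounds with constants $C_1^{(j)}, C_2^{(j)}$ depending only on $n$, $s$, $j$, $\Omega$, $\epsilon$, $\Lambda_1$, $\Lambda_2$, $t_0$, and blowing up as $t_0 \downarrow 0$.

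There is no substantial obstacle here beyond bookkeeping; the only delicate point is the rigorous justification of interchanging the time derivative with the infinite sum, but this is immediate from the uniform-on-$[t_0,\infty)$ estimates above (any power $\lambda_k^m$ is absorbed by the exponential factor $e^{-\lambda_k t_0}$ once $k$ is large enough). Iterating this observation also shows that $u(\cdot, t)$ is in fact $C^\infty$ in $t$ on $(0, \infty)$ with values in $C^s(\R^n)$, which is the qualitative content of the corollary.
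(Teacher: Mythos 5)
Your proposal is correct and follows essentially the same route as the paper: differentiate the eigenfunction series term by term to get $(-1)^j\sum_k u_k\lambda_k^j\phi_k e^{-\lambda_k t}$, and then rerun the argument of Theorem \ref{thm.main} with $w$ replaced by $w+j$. Your justification of the termwise differentiation via uniform convergence on $[t_0,\infty)$ is in fact slightly more explicit than what the paper records.
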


\begin{proof}
 To prove this corollary we can simply use the same argument used in the proof of Theorem \ref{thm.main}, but now considering the expression of $\frac{\de^j u}{\de t^j} (\cdot, t)$ obtained deriving term by term,
  \[
     \frac{\de^j u}{\de t^j} (\cdot, t) = (-1)^j\sum_{k> 0}\lambda_k^j u_k\phi_k(x)e^{-\lambda_k t}.
  \]

  The temporal derivation term by term can be done because the series expansion is uniformly convergent with respect to time over compact subsets in $\R^+$.

  Now proceed with the proof of Theorem \ref{thm.main} using $w+j$ instead of $w$.
\end{proof}

\section{The Pohozaev identity for solutions to the fractional heat equation}
\label{sec6}

In this section we prove Corollary \ref{cor.pohozaev}.

The main result we will need, corresponding to \cite[Proposition 1.6]{RS-P}, is the following. 
We define $\Omega_\rho = \{x\in \Omega : \delta(x) \geq \rho\}$.

\begin{thm}[\cite{RS-P}]
\label{thm.pohozaevRS}
 Let $\Omega$ be a bounded and $C^{1,1}$ domain. Assume that $u$ is a $H^s(\R^n)$ function which vanishes in $\R^n\setminus\Omega$, and satisfies
 \begin{enumerate}
  \item[(a)]{
  $u \in C^s(\R^n)$ and, for every $\beta \in [s, 1+2s)$, $u$ is of class $C^\beta(\Omega)$ and
  \[
   [u]_{C^\beta(\Omega_\rho)} \leq C\rho^{s-\beta},~\forall \rho \in (0,1).
  \]
  }
  \item[(b)]{
  The function $u/\delta^s|_\Omega$ can be continuously extended to $\overline{\Omega}$. Moreover, there exists $\alpha \in (0,1)$ such that $u/\delta^s\in C^\alpha(\overline{\Omega})$. In addition, for all $\beta\in [\alpha, s+\alpha]$, it holds the estimate
  \[
   [u/\delta^s]_{C^\beta(\Omega_\rho)} \leq C\rho^{\alpha -\beta},~\forall \rho\in(0,1).
  \]
  }
  \item[(c)] $\fls u$ is bounded in $\Omega$.
 \end{enumerate}

 Then, the following identity holds
 \[
  \int_\Omega (x\cdot \nabla u) \fls u dx = \frac{2s-n}{2}\int_\Omega u \fls u dx - \frac{\Gamma(1+s)^2}{2} \int_{ \de \Omega}\left(\frac{u}{\delta^s}\right)^2 (x\cdot \nu) d\sigma,
 \]
  where $\nu$ is the unit outward normal to $\de \Omega$ at $x$, and $\Gamma$ is the Gamma function.
\end{thm}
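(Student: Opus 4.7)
My plan follows the scaling strategy developed in \cite{RS-P}. The key observation is that $x\cdot\nabla$ is the infinitesimal generator of dilations, so a Pohozaev-type identity for $\fls$ should arise by differentiating the nonlocal bilinear form along a rescaling. Assume first that $\Omega$ is star-shaped with respect to the origin, set $u_\lambda(x) := u(\lambda x)$ for $\lambda>1$ so that $u_\lambda$ is supported in $\lambda^{-1}\Omega\subset\subset\Omega$, and consider
\[
I(\lambda) := \int_{\R^n} u_\lambda(x)\,\fls u(x)\,dx.
\]
Formally, $\partial_\lambda I(\lambda)|_{\lambda=1^+} = \int_\Omega (x\cdot\nabla u)\,\fls u\,dx$, whereas rewriting $I(\lambda)$ on the Fourier side via Plancherel as $\int \widehat{u_\lambda}(\xi)\,\overline{\hat u(\xi)}\,|\xi|^{2s}\,d\xi$ and changing variables there formally yields $\tfrac{2s-n}{2}\int u\,\fls u\,dx$. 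The boundary term in the desired identity is precisely the discrepancy between these two formal computations.

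This discrepancy is caused by the fact that $u$ has only $C^s$ regularity across $\partial\Omega$: by hypothesis (b), $u(x)\sim c(x)\delta(x)^s$ near $\partial\Omega$ with $c = u/\delta^s\in C^\alpha(\overline\Omega)$, so the difference quotient $(u_\lambda - u)/(\lambda-1)$ does not converge to $x\cdot\nabla u$ in any reasonable $H^s$-sense. Instead, in a boundary layer of width of order $\lambda-1$ the rescaling $u_\lambda$ pushes the $\delta^s$ singularity slightly inward, and the resulting mismatch carries Gagliardo energy proportional to $(\lambda-1)\int_{\partial\Omega}(u/\delta^s)^2(x\cdot\nu)\,d\sigma$ with a universal constant depending only on $s$. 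Identifying this constant as $\Gamma(1+s)^2/2$ is the heart of the proof.

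To make the expansion quantitative I would localize with a partition of unity subordinate to a tubular neighbourhood of $\partial\Omega$ and a relatively compact interior piece. Hypothesis (a), in the form $[u]_{C^\beta(\Omega_\rho)}\leq C\rho^{s-\beta}$, lets the interior portion of the discrepancy be handled by standard integration by parts away from the boundary, while hypothesis (b) guarantees that on each boundary patch $u$ is well approximated by $c(x_0)\delta^s$ after a $C^{1,1}$ change of variables flattening $\partial\Omega$ to $\{x_n=0\}$. This reduces the boundary-layer calculation to a one-dimensional explicit identity for $u\approx c(x_0)(x_n)_+^s \chi(x)$, and there the symbol of $\fls$ applied to $(x_n)_+^s$ produces the constant $\Gamma(1+s)^2$ through a Beta-function identity.

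The main obstacle is making the informal boundary-layer expansion rigorous: one must show that $[I(\lambda)-I(1)]/(\lambda-1)$ converges as $\lambda\to 1^+$ and that the limit splits cleanly as $\tfrac{2s-n}{2}\int u\,\fls u\,dx$ plus the boundary integral, with no leftover error. This requires sharp control of the off-diagonal cross-terms in the bilinear form via the weighted H\"older bounds in (a) and (b), a careful treatment of cutoff and flattening errors, and the explicit 1D computation that pins down the constant. Hypothesis (c) enters here as well: it guarantees that the left-hand side $\int (x\cdot\nabla u)\,\fls u\,dx$ is absolutely convergent despite $x\cdot\nabla u$ being of order $\delta^{s-1}$ near $\partial\Omega$. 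A final partition-of-unity step removes the star-shapedness assumption by decomposing a general $C^{1,1}$ domain into locally star-shaped pieces and summing the corresponding contributions.
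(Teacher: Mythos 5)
This theorem is not proved in the present paper at all: it is quoted from \cite{RS-P} (Proposition 1.6 there) and used as a black box in Section 6, so there is no internal proof to compare against. What you have written is, in effect, a reconstruction of the argument of \cite{RS-P} itself, and at the level of strategy your outline is faithful to it: the dilation $u_\lambda(x)=u(\lambda x)$ on star-shaped domains, the identification of the boundary term as the failure of the formal scaling computation caused by the $\delta^s$ behaviour of $u$ at $\partial\Omega$, the one-dimensional model computation producing $\Gamma(1+s)^2$, and the final reduction of a general $C^{1,1}$ domain to star-shaped pieces.

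As a proof, however, the proposal has genuine gaps beyond the one you flag yourself. First, the identity is quadratic in $u$, so the final partition-of-unity step cannot be carried out by simply ``summing the corresponding contributions'': the cross terms between different pieces are unaccounted for, and one must first establish and invoke a bilinear version of the identity, as \cite{RS-P} does. Second, the mechanism that actually produces the boundary integral is more delicate than a Gagliardo-energy count in a boundary layer: the argument of \cite{RS-P} writes $\int u_\lambda \fls u\,dx$ as $\int w_\lambda\, w_{1/\lambda}\,dx$ with $w=(-\Delta)^{s/2}u$, and the boundary term arises from the logarithmic singularity of $w$ across $\partial\Omega$, namely $w\sim c_1\bigl(\log\delta+c_2\chi_\Omega\bigr)\,(u/\delta^s)$ near the boundary; deriving this asymptotic expansion of $(-\Delta)^{s/2}u$ from hypotheses (a)--(b) is the technical core of the proof and is absent from your sketch. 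Until the convergence of $\bigl(I(\lambda)-I(1)\bigr)/(\lambda-1)$ is rigorously established and the constant is pinned down by the one-dimensional computation, what you have is an accurate roadmap of \cite{RS-P}, not a proof.
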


By the results in \cite{RS}, any bounded solution to the elliptic problem $(-\Delta)^s u=f(x,u)$ in $\Omega$ satisfies the hypotheses of the previous result. 
Using the results in \cite{RS}, we show here that the same happens for solutions to the fractional heat equation $\partial_t u+(-\Delta)^su=0$ in $\Omega$.

\begin{proof}[Proof of Corollary \ref{cor.pohozaev}]
 In Theorem \ref{thm.main} we have checked that indeed $u(\cdot, t)/\delta^s$ can be extended continuously up to the boundary for positive times, so that the boundary integral makes sense.

 For the fixed $t > 0$, we will use Theorem \ref{thm.pohozaevRS} to prove the identity. It is enough to check that the hypothesis are fulfilled. Firstly, $u$ for positive times is always in $H^s(\R^n)$, since it is a solution of the fractional heat equation. In addition, to check the third hypothesis we could see that $\de_t u = -\fls u$ and in Corollary \ref{cor.main} we had shown that all temporal derivatives are $C^s(\R^n)$ for positive times, so in particular, are uniformly bounded in $\Omega$.

 In order to check hypothesis (a) and (b), we will use the result \cite[Corollary 1.6]{RS}, where there is stated that any solution of the elliptic problem $(-\Delta)^s u=f(x,u)$ in $\Omega$, for $f \in C^{0,1}_{loc}(\overline\Omega\times \R)$, satisfies the hypotheses, for $\alpha$ and $C$ depending only on $s$, $\Omega$, $f$, $\|u\|_{L^\infty(\R^n)}$ and $\beta$. Moreover, from the proof of this result one can see that if $f(x, u) = \lambda u$, then $C = \bar C\lambda \|u\|_{L^\infty(\Omega)}$, where $\bar C$ depends only on $\Omega$, $s$, $n$ and $\beta$. Therefore, considering the eigenvalue-eigenfunction problem we have,
 \[
  [\phi_k]_{C^\beta(\Omega_\rho)} \leq C\lambda_k\|\phi_k\|_{L^\infty(\Omega)}\rho^{s-\beta},~\forall \rho \in (0,1),~\beta \in [s, 1+2s),
 \]
 and
 \[
  [\phi_k/\delta^s]_{C^\beta(\Omega_\rho)}\leq C\lambda_k\|\phi_k\|_{L^\infty(\Omega)}\rho^{\alpha - \beta},~\forall \rho \in (0,1),~\beta \in [\alpha,s+\alpha],
 \]
where $\phi_k$ is the $k$-th eigenfunction of the fractional Laplacian problem in $\Omega$, and $C$ depends only on $n$, $s$, $\Omega$ and $\beta$. Proceed like in the proof of Theorem \ref{thm.main}, expressing $u(x, t) = \sum_{k > 0} u_k \phi_k e^{-\lambda_k t}$, being $u_k$ the coefficients of $u_0$ in the basis $\{\phi_k\}_{k > 0}$,
\[
 [u(\cdot, t)]_{C^\beta(\Omega_\rho)} \leq \sum_{k > 0} u_k  [\phi_k]_{C^\beta(\Omega_\rho)} e^{-\lambda_k t} \leq C\rho^{s-\beta}\sum_{k > 0} u_k \lambda_k\|\phi_k\|_{L^\infty(\Omega)} e^{-\lambda_k t},
\]
similarly,
\[
 [u(\cdot, t)/\delta^s]_{C^\beta(\Omega_\rho)} \leq C\rho^{\alpha-\beta}\sum_{k > 0} u_k \lambda_k\|\phi_k\|_{L^\infty(\Omega)}e^{-\lambda_k t}.
\]

Now, we recall the result from Proposition \ref{prop.eiglinf}, which stated
\[
 \|\phi_k\|_{L^\infty(\Omega)} \leq C\lambda_k^{w-1}\|\phi_k\|_{L^2(\Omega)}
\]
for some $w\in \N$, and $C$ depending only on $n$, $s$ and $\Omega$. Hence, assuming unitary eigenfunctions,
\[
 [u(\cdot, t)]_{C^\beta(\Omega_\rho)} \leq C\rho^{s-\beta}\sum_{k > 0} u_k \lambda_k^w e^{-\lambda_k t}
\]
and
\[
 [u(\cdot, t)/\delta^s]_{C^\beta(\Omega_\rho)} \leq C\rho^{\alpha-\beta}\sum_{k > 0} u_k \lambda_k^w e^{-\lambda_k t}.
\]

Which are expressions almost identical to the ones found in the proof of Theorem \ref{thm.main}. Proceeding the same way, we reach
\[
 [u(\cdot, t)]_{C^\beta(\Omega_\rho)} \leq C(t)\|u_0\|_{L^2(\Omega)}\rho^{s-\beta},
\]
and
\[
 [u(\cdot, t)/\delta^s]_{C^\beta(\Omega_\rho)} \leq C(t)\|u_0\|_{L^2(\Omega)}\rho^{\alpha-\beta},
\]
for some constant $C(t)$ depending only on $n$, $s$, $\Omega$, $\beta$ and $t$, that blows up when $t \downarrow 0$. So, for any fixed $t > 0$, we have seen that hypothesis of Corollary \ref{cor.pohozaev} are fulfilled, and therefore,
 \[
  \int_\Omega (x\cdot \nabla u) \fls u dx = \frac{2s-n}{2}\int_\Omega u \fls u dx - \frac{\Gamma(1+s)^2}{2} \int_{ \de \Omega}\left(\frac{u}{\delta^s}\right)^2 (x\cdot \nu) d\sigma,
 \]
 where we have now again used that $u = u(\cdot, t)$ for a fixed $t > 0$.
\end{proof}

\end{document}